\theoremstyle{definition}
\newtheorem{definition}{Definition}[section] %Numeroidaan määritelmät yms lukukohtaisesti. Juoksevan numeroinnin saa jättämällä [section]-option pois
\newtheorem{lemma}[definition]{Lemma} % Tässä ympäristö 'lemma' käyttää laskuria 'määritelmä'
\newtheorem{theorem}[definition]{Theorem}
\newtheorem{corollary}[definition]{Corollary}
\newtheorem{problem}[definition]{Problem}
\newtheorem{proposition}[definition]{Proposition}
\newcommand{\N}{\mathbb{N}}
\newcommand{\Npos}{{\mathbb{N}_+}}
\newcommand{\Z}{\mathbb{Z}}
\newcommand{\graph}[1]{{\mathcal{#1}}} % Graafi
\newcommand{\init}[1]{\iota(#1)} % Initial node
\newcommand{\term}[1]{\tau(#1)} % Terminal node
\newcommand{\abs}[1]{{\left\vert #1 \right\vert}} % Itseisarvo
\newcommand{\arr}{{\hbox{r}}}
\newcommand{\digs}{\Sigma} % Symbol set consisting of digits
\newcommand{\bra}[1]{{\Sigma_{(#1)}}} % Bracket set
\newcommand{\stud}[1]{{#1^{(*)}}} % Studded shift
\DeclareMathOperator{\id}{Id} % Identity map
\DeclareMathOperator{\foll}{\omega} % Follower set of a left-infinite sequence
\DeclareMathOperator{\lbl}{\lambda} % Labeling map for Krieger graph
\DeclareMathOperator{\cyl}{Cyl} % Cylinder set
\begin{document}

\title{Direct Prime Subshifts and Canonical Covers}
\author{Johan Kopra}
\affil{Department of Mathematics and Statistics, \\FI-20014 University of Turku, Finland}
\affil{jtjkop@utu.fi}
\date{}
\maketitle

\setcounter{page}{1}

\begin{abstract}
\noindent We present a new sufficient criterion to prove that a non-sofic half-synchronized subshift is direct prime. The criterion is based on conjugacy invariant properties of Fischer graphs of half-synchronized shifts. We use this criterion to show as a new result that all $n$-Dyck shifts are direct prime, and we also give new proofs of direct primeness of non-sofic beta-shifts and non-sofic $S$-gap shifts. We also construct a class of non-sofic synchronized direct prime subshifts which additionally admit reversible cellular automata with all directions sensitive.
\end{abstract}

\providecommand{\keywords}[1]{\textbf{Keywords:} #1}
\noindent\keywords{half-synchronized subshifts, Fischer graphs, direct factorizations, sensitivity, cellular automata}

\section{Introduction}

Whenever a subshift $X$ can be represented as a product $Y\times Z$ (in the sense that $X$ is conjugate to $Y\times Z$), the dynamics of $X$ can be understood in terms of the dynamics of the simpler systems $Y$ and $Z$, and such systems $Y$ and $Z$ are called direct factors of $X$. If in all decompositions of $X$ into $Y\times Z$ either $Y$ or $Z$ is a trivial subshift, we say that $X$ is direct prime. Direct prime subshifts can thus be seen as building blocks of more general subshifts in a similar sense as prime numbers can be seen as building blocks of natural numbers.

In general determining whether a given subshift is direct prime or not seems to be a difficult problem. Lind gives sufficient conditions in~\cite{Lind84} for SFTs based on their entropies: for example any mixing SFT with entropy $\log p$ for a prime number $p$ is topologically direct prime. The paper of Meyerovitch~\cite{Mey17} contains results on multidimensional full shifts, multidimensional 3-colored chessboard shifts and $n$-Dyck shifts, a class of non-sofic half-synchronized shifts.

To approach the problem of determining whether a given subshift is direct prime we consider Fischer graphs, which are certain labeled directed graphs that are canonically associated to all half-synchronized subshifts. Broadly speaking, we would like to pinpoint some suitable property $P$ that necessarily holds in the Fischer graph of any half-synchronized subshift which is a product of two non-trivial subshifts. Then to prove that a half-synchronized subshift $X$ is not equal to $Y\times Z$ for nontrivial $Y$ and $Z$, we check that the Fischer graph of $X$ does not have the property $P$. One more obstacle remains: conjugate subshifts can in fact have different Fischer graphs, so to conclude from this that $X$ is not even conjugate to any $Y\times Z$, we would need to check that the property $P$ does not hold for the Fischer graph of any subshift that is conjugate to $X$. For this we need to choose the property $P$ so that it remains invariant between Fischer graphs of conjugate subshifts.

The concrete sufficient criterion that we present for showing that a half-synchronized non-sofic subshift (with a fixed point) is direct prime is based on choosing the property $P$ above as ``the Fischer graph of the subshift has a strictly proximal and eventually geodesic pair of infinite paths'' in Corollary~\ref{critCor}. We use this criterion to prove that $n$-Dyck shifts are direct prime for all integers $n>1$: previously this was known in the case when $n$ is a prime number~\cite{Mey17}. Using the same criterion we also give new proofs for the direct primeness of non-sofic $S$-gap shifts in Theorem~\ref{primeS} (which also follows from~\cite{Kop20glider} by using the argument of~\cite{Kop20}) and non-sofic beta-shifts in Theorem~\ref{primeBeta} (originally in~\cite{Kop20}).

Our original motivation for considering direct prime subshifts comes from the question of how the structure of a given subshift $X$ affects the range of possible dynamics of reversible cellular automata (RCA) on $X$. In Section~2.4 of~\cite{KopDiss} we argue that a relatively mild reasonable criterion for a CA to be dynamically complex is, in the terminology of directional dynamics of Sablik~\cite{Sab08}, that all its directions are sensitive. Depending on the subshift $X$ such RCA may exist (e.g. whenever $X$ is an infinite transitive sofic shift~\cite{Kop20glider}) or not (e.g. whenever $X$ a non-sofic beta-shift~\cite{Kop20} or a non-sofic $S$-gap shift~\cite{Kop20glider}). Since the existence of RCA with all directions sensitive on a subshift $X$ has been confirmed when $X$ is an infinite transitive sofic shift~\cite{Kop20glider}, the natural next step would to focus on the case when $X$ is a non-sofic synchronized subshift.

If a subshift is conjugate to a product $Y\times Z$ of two infinite transitive subshifts, then RCA with all directions sensitive exist for a trivial reason: the partial shift map $\tau: Y\times Z\to Y\times Z$ defined by $\tau(y,z)=(\sigma(y),z)$ ($\sigma$ is the usual shift map on $Y$) is such an RCA. Up to this point it has been unclear whether RCA with all directions sensitive can exist on any direct prime non-sofic synchronized subshift $X$ (in particular, in this case the partial shift map construction is unavailable). By using our new criterion for proving direct primeness we can present examples of such subshifts in Section~\ref{starStudSect}.

\section{Preliminaries}

In this section we recall some preliminaries concerning symbolic dynamics and topological dynamics in general. Good references to these topics are~\cite{Kur03,LM95}.

A (possibly infinite) nonempty set $A$ is called an \emph{alphabet}. For $n\in\Npos$ we have a special alphabet $\digs_n=\{0,1,\dots,n-1\}$. The set $A^\Z$ of bi-infinite sequences (\emph{configurations}) over $A$ is called a \emph{full shift}. Formally any $x\in A^\Z$ is a function $\Z\to A$ and the value of $x$ at $i\in\Z$ is denoted by $x[i]$. It contains finite, right-infinite and left-infinite subsequences denoted by $x[i,j]=x[i]x[i+1]\cdots x[j]$, $x[i,\infty]=x[i]x[i+1]\cdots$ and $x[-\infty,i]=\cdots x[i-1]x[i]$.

A configuration $x\in A^\Z$ (respectively, $x\in A^\N$) is \emph{periodic} if there is a $p\in\Npos$ such that $x[i+p]=x[i]$ for all $i\in\Z$ (respectively, $i\in\N$). Then we may also say that $x$ is $p$-periodic or that $x$ has period $p$. If $x$ is $1$-periodic, we call it a \emph{fixed point}. We say that $x\in A^\Z$ (respectively, $x\in A^\N$) is \emph{eventually periodic} if there are $p\in\Npos$ and $i_0\in\Z$ (respectively, $i_0\in\N$) such that $x[i+p]=x[i]$ holds for all $i\geq i_0$.

A \emph{subword} of $x\in A^\Z$ is any finite sequence $x[i,j]$ where $i,j\in\Z$, and we interpret the sequence to be empty if $j<i$. Any finite sequence $w=w[1] w[2]\cdots w[n]$ (also the empty sequence, which is denoted by $\epsilon$), where $w[i]\in A$, is a \emph{word} over $A$. Unless we consider a word $w$ as a subword of some configuration, we start indexing the symbols of $w$ from $1$ as we have done here. Similarly, right-infinite sequences are indexed $x=x[0]x[1]x[2]\cdots$ and left-infinite sequences are indexed $x=\cdots x[-3]x[-2]x[-1]$. If $w\neq\epsilon$, we say that $w$ occurs in $x$ at position $i$ if $x[i]\cdots x[i+n-1] = w[1]\cdots w[n]$. The concatenation of a word or a left-infinite sequence $u$ with a word or a right-infinite sequence $v$ is denoted by $uv$. A word $u$ is a \emph{prefix} of a word or a right-infinite sequence $x$ if there is a word or a right-infinite sequence $v$ such that $x=uv$. Similarly, $u$ is a \emph{suffix} of a word or a left-infinite sequence $x$ if there is a word or a left-infinite sequence $v$ such that $x=vu$. The set of all words over $A$ is denoted by $A^*$, and the set of non-empty words is $A^+=A^*\setminus\{\epsilon\}$. The set of words of length $n$ is denoted by $A^n$. For a word $w\in A^*$, $\abs{w}$ denotes its length, i.e. $\abs{w}=n\iff w\in A^n$. For any word $w\in A^+$ we denote by $w^\infty$ the right-infinite sequence obtained by infinite repetitions of the word $w$. We denote by $w^\Z$ the configuration defined by $w^\Z[in,(i+1)n-1]=w$ (where $n=\abs{w}$) for every $i\in\Z$.

Any collection of words $L\subseteq A^*$ is called a \emph{language}. For any set $S$ of configurations, right- or left-infinite sequences and finite words the collection of words appearing as subwords of elements of $S$ is the language of $S$, denoted by $L(S)$. For $n\in\N$ we denote $L^n(S)=L(S)\cap A^n$. For any $L,K\subseteq A^*$, let
\[LK=\{u,v\mid u\in L, v\in K\}\qquad L^*=\{w_1\cdots w_n\mid n\geq 0,w_i\in L\}\subseteq A^*.\]
If $\epsilon\notin L$, define $L^+=L^*\setminus\{\epsilon\}$ and if $\epsilon\in L$, define $L^+=L^*$.

To consider topological dynamics on subsets of the full shifts, the set $A^\Z$ is endowed with the product topology (with respect to the discrete topology on $A$). This is a metrizable space, which is also compact when $A$ is finite. The shift map $\sigma:A^\Z\to A^\Z$ is defined by $\sigma(x)[i]=x[i+1]$ for $x\in A^\Z$, $i\in\Z$, and it is a homeomorphism. If $X\subseteq A^\Z$ is such that $\sigma(X)=X$, we say that $X$ is \emph{shift-invariant}. If $A$ is finite, any topologically closed shift-invariant nonempty subset $X\subseteq A^\Z$ is called a \emph{subshift} or just a \emph{shift}. Alternatively, any subshift $X\subseteq A^\Z$ can be characterized by a list of \emph{forbidden words} $\mathcal{F}\subseteq A^+$ such that
\[X=\{x\in A^\Z\mid\mbox{No element of }w\in\mathcal{F}\mbox{ occurs in }x\}.\]
Any $w\in L(X)\setminus{\epsilon}$ and $i\in\Z$ determine a \emph{cylinder} of $X$
\[\cyl_X(w,i)=\{x\in X\mid w \mbox{ occurs in }x\mbox{ at position }i\}.\]

If $A$ and $B$ are alphabets, the elements of $A^n\times B^n$ can be naturally identified with the elements of $(A\times B)^n$ for all $n>0$, and elements of $A^\Z\times B^\Z$ can be identified with the elements of $(A\times B)^\Z$. Using these identifications we may say that $X=Y\times Z$ is a subshift whenever $Y$ and $Z$ are subshifts.

\begin{definition}
Let $X\subseteq A^\Z$ and $Y\subseteq B^\Z$ be arbitrary sets of configurations. We say that a map $F:X\to Y$ is a \emph{sliding block code} from $X$ to $Y$ (with memory $m$ and anticipation $a$ for integers $m\leq a$) if there exists a \emph{local rule} $f:A^{a-m+1}\to B$ such that $F(x)[i]=f(x[i+m],\dots,x[i],\dots,x[i+a])$. If $X=Y$ and $X$ is a subshift, we say that $F$ is a \emph{cellular automaton} (CA). If we can choose $m$ and $a$ so that $-m=a=r\geq 0$, we say that $F$ is a radius-$r$ CA.
\end{definition}

Note that both memory and anticipation can be either positive or negative. Note also that if $F$ has memory $m$ and anticipation $a$ with the associated local rule $f:A^{a-m+1}\to A$, then $F$ is also a radius-$r$ CA for $r=\max\{\abs{m},\abs{a}\}$, with possibly a different local rule $f':A^{2r+1}\to A$.

If $X$ and $Y$ are subshifts and if there is a bijective sliding block code $F:X\to Y$, we say that $F$ is a conjugacy and that $X$ is conjugate with $Y$ (via $F$). It is known that then the inverse map of $F$ is also a sliding block code. In particular, the inverse map of a bijective CA is also a CA, which is why they are also known as \emph{reversible} CA (RCA).

The notions of almost equicontinuity and sensitivity can be defined for general topological dynamical systems. We omit the topological definitions, because for cellular automata on transitive subshifts there are combinatorial characterizations for these notions using blocking words. We present these alternative characterizations below.

\begin{definition}
Let $F:X\to X$ be a radius-$r$ CA and $w\in L(X)$. We say that $w$ is a \emph{blocking word} if there is an integer $e$ with $\abs{w}\geq e\geq r+1$ and an integer $p\in[0,\abs{w}-e]$ such that
\[\forall x,y\in\cyl_X(w,0), \forall n\in\N, F^n(x)[p,p+e-1]=F^n(y)[p,p+e-1].\]
\end{definition}

The following is proved in Proposition~2.1 of~\cite{Sab08}.

\begin{proposition}\label{equiblock}
If $X$ is a transitive subshift and $F:X\to X$ is a CA, then $F$ is almost equicontinuous if and only if it has a blocking word.
\end{proposition}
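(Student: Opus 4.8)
The plan is to prove both implications of Proposition~\ref{equiblock} by translating the topological notion of an equicontinuity point into the combinatorial language of patterns. Write $r$ for the radius of $F$ and let $E_F\subseteq X$ denote its set of equicontinuity points. Since $F$ commutes with $\sigma$ and $\sigma$ is a homeomorphism of $X$, the set $E_F$ is $\sigma$-invariant, and it is always a $G_\delta$ set; hence, using that $X$ is transitive, $E_F$ is residual as soon as it is nonempty. In particular ``$F$ is almost equicontinuous'' may be used interchangeably with ``$F$ has at least one equicontinuity point'', which is the formulation I will actually verify.

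For the implication from a blocking word to almost equicontinuity, suppose $w\in L(X)$ is blocking with parameters $e\geq r+1$ and $p\in[0,\abs{w}-e]$. First I would check that the set $G$ of configurations $x\in X$ in which $w$ occurs at some position $\geq m$ and at some position $\leq -m$ for every $m$ is a dense $G_\delta$ set: it is visibly a countable intersection of open sets, and density is a routine consequence of transitivity of $X$ together with $w\in L(X)$ (given a cylinder, transitivity lets one append arbitrarily far-away occurrences of $w$ on both sides). It then remains to show $G\subseteq E_F$. The key point is that a blocked window acts as a wall: if $a<b$ are two positions at which $w$ occurs in $x$, then for every $x'\in X$ with $x'[a,b+\abs{w}-1]=x[a,b+\abs{w}-1]$ one has $F^n(x)[a+p,b+p+e-1]=F^n(x')[a+p,b+p+e-1]$ for all $n$. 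This I would prove by induction on $n$: the case $n=0$ is immediate from $p\geq 0$ and $p+e-1\leq\abs{w}-1$, and in the inductive step the two iterates agree on the two blocked sub-windows $[a+p,a+p+e-1]$ and $[b+p,b+p+e-1]$ because the blocking property, applied to $\sigma^a(x),\sigma^a(x')\in\cyl_X(w,0)$ and to $\sigma^b(x),\sigma^b(x')\in\cyl_X(w,0)$ together with $F^n\circ\sigma^j=\sigma^j\circ F^n$, forces each of these windows to carry one fixed word depending only on $n$; on the region strictly between the windows they agree by the inductive hypothesis and the radius-$r$ estimate. The hypothesis $e\geq r+1$ is exactly what guarantees that these pieces overlap and cover $[a+p,b+p+e-1]$. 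Granting this, any $x\in G$ is an equicontinuity point: to control $F^n(x)$ on $[-m,m]$ it suffices to pick occurrences of $w$ at positions $a\leq -m-p$ and $b$ with $b+p+e-1\geq m$ and then require agreement with $x$ on the finite window $[a,b+\abs{w}-1]$.

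For the converse, suppose $x_0\in E_F$. Unwinding equicontinuity at $x_0$ for an $\varepsilon$ small enough that $d(u,v)<\varepsilon$ forces $u[-r,r]=v[-r,r]$, there is an $l\geq r$ such that every $x\in X$ with $x[-l,l]=x_0[-l,l]$ satisfies $F^n(x)[-r,r]=F^n(x_0)[-r,r]$ for all $n$. I claim that $w:=x_0[-l,l]$ is blocking with $e=2r+1$ and $p=l-r$. Indeed, for $x,y\in\cyl_X(w,0)$ the configurations $\sigma^l(x)$ and $\sigma^l(y)$ agree with $x_0$ on $[-l,l]$, so $F^n(\sigma^l x)[-r,r]=F^n(x_0)[-r,r]=F^n(\sigma^l y)[-r,r]$ for all $n$; applying $F^n\circ\sigma^l=\sigma^l\circ F^n$ and shifting back yields $F^n(x)[l-r,l+r]=F^n(y)[l-r,l+r]$, which is the blocking condition once one checks $e=2r+1\geq r+1$ and $0\leq p=l-r\leq\abs{w}-e=2(l-r)$.

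The only genuinely delicate part is the inductive \emph{shielding} claim in the first implication: verifying that a blocked window of width merely $r+1$, rather than $2r+1$, already isolates the coordinates behind it for all future times, which comes down to a careful interval-covering bookkeeping in the inductive step. The rest is unwinding definitions and keeping track of shifts and cylinder indices, and the transitivity of $X$ enters only through the density of $G$ and through the reduction of ``almost equicontinuous'' to ``has an equicontinuity point''.
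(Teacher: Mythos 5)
Your proof is correct. The paper does not prove this proposition itself but cites Proposition~2.1 of~\cite{Sab08}, and your argument is exactly the standard Kůrka--Sablik proof behind that citation: two occurrences of a blocking word shield the window between them for all time (your induction with the bookkeeping $e\geq r+1$ is the right verification), giving a dense $G_\delta$ of equicontinuity points, and conversely an equicontinuity point yields a blocking word by unwinding the definition with $e=2r+1$, $p=l-r$.
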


We say that a CA on a transitive subshift is \emph{sensitive} if it is not almost equicontinuous. The notion of sensitivity is refined by Sablik's framework of directional dynamics~\cite{Sab08}.

\begin{definition}
Let $F:X\to X$ be a cellular automaton and let $p,q\in\Z$ be coprime integers, $q>0$. Then $p/q$ is a \emph{sensitive direction} of $F$ if $\sigma^p\circ F^q$ is sensitive. Similarly, $p/q$ is an \emph{almost equicontinuous direction} of $F$ if $\sigma^p\circ F^q$ is almost equicontinuous.
\end{definition}

For a subshift $X$ and a configuration $x\in X$ denote $x^+=x[0,\infty]$ and $x^-=x[-\infty,-1]$, so $x=x^-x^+$. Denote $X^+=\{x^+\mid x\in X\}$ and $X^-=\{x^-\mid x\in X\}$. For any $x\in X$ we define the follower set of $x^-$ by $\foll_X(x^-)=\{y^+\in X^+\mid x^- y^+\in X\}$ and for any $w\in L(X)$ we define the follower set of $w$ by $\foll_X(w)=\{x^+\in X^+\mid wx^+\in X^+\}$. The subscript $X$ may be dropped when it is clear from the context. By making use of follower sets we can define some natural classes of transitive subshifts.

\begin{definition}
We say that a subshift $X$ is \emph{transitive} (or \emph{irreducible} in the terminology of~\cite{LM95}) if for all words $u,v\in L(X)$ there is $w\in L(X)$ such that $uwv\in L(X)$. We say that $X$ is \emph{mixing} if for all $u,v\in L(X)$ there is $N\in\N$ such that for all $n\geq N$ there is $w\in L^n(X)$ such that $uwv\in L(X)$.
\end{definition}

\begin{definition}\label{halfsynchDef}
For a subshift $X$, we say that a word $w\in L(X)$ is \emph{half-synchronizing} if there is a sequence $x^{-}\in X^{-}$ with $x^{-}[-\abs{w},-1]=w$ satisfying $L(x^-)=L(X)$ and $\foll(x^-)=\foll(w)$. If $X$ is a transitive subshift that has a half-synchronizing word, we say that $X$ is \emph{half-synchronized}.
\end{definition}

\begin{definition}
For a subshift $X$, we say that a word $w\in L(X)$ is \emph{synchronizing} if all sequences $x^{-}\in X^{-}$ with $x^{-}[-\abs{w},-1]=w$ satisfy $\foll(x^-)=\foll(w)$. If $X$ is a transitive subshift that has a synchronizing word, we say that $X$ is \emph{synchronized}.
\end{definition}
In particular, synchronized subshifts are half-synchronized.

\begin{definition}
A subshift $X$ is \emph{sofic} if $\{\foll(x^-)\mid x\in X\}$ is a finite set.
\end{definition}

As mentioned in the introduction, we say that a subshift $Y$ is a direct factor of a subshift $X$ if there is a subshift $Z$ such that $X$ is conjugate to $Y\times Z$. We also say that a subshift $X$ is direct prime if $X$ being conjugate to $Y\times Z$ implies that either $Y$ or $Z$ is a trivial subshift (i.e. contains only one configuration). We make the simple observation that the class of half-synchronized subshifts is closed under taking direct factors.

\begin{lemma}\label{halfsynchprod}
If $Y\times Z$ is half-synchronized, then so are also $Y$ and $Z$.
\end{lemma}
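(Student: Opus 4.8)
The plan is to show that each of $Y$ and $Z$ is transitive and has a half-synchronizing word; as the two cases are symmetric I only treat $Y$. Everything rests on the componentwise behaviour of the relevant objects under the identification of $(A\times B)^\Z$ with $A^\Z\times B^\Z$: for every $n$ one has $L^n(Y\times Z)=L^n(Y)\times L^n(Z)$, and $(Y\times Z)^+=Y^+\times Z^+$, $(Y\times Z)^-=Y^-\times Z^-$. Consequently, since $(y^+,z^+)\in\foll_{Y\times Z}(w_1,w_2)$ holds precisely when $w_1y^+\in Y^+$ and $w_2z^+\in Z^+$, one gets $\foll_{Y\times Z}(w_1,w_2)=\foll_Y(w_1)\times\foll_Z(w_2)$ for every $(w_1,w_2)\in L(Y\times Z)$, and by the same reasoning $\foll_{Y\times Z}(x_1^-,x_2^-)=\foll_Y(x_1^-)\times\foll_Z(x_2^-)$ for every $(x_1^-,x_2^-)\in(Y\times Z)^-$.

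Transitivity of $Y$ is immediate: given $u,v\in L(Y)$, pick $u',v'\in L(Z)$ with $\abs{u'}=\abs{u}$ and $\abs{v'}=\abs{v}$ (possible since $Z$ is a nonempty subshift and therefore has words of every length), so that $(u,u'),(v,v')\in L(Y\times Z)$; transitivity of $Y\times Z$ yields $(w,w')\in L(Y\times Z)$ with $(u,u')(w,w')(v,v')\in L(Y\times Z)$, and the first coordinate of this word is $uwv\in L(Y)$.

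For the half-synchronizing word, let $(w_1,w_2)\in L(Y\times Z)$ be half-synchronizing for $Y\times Z$, witnessed by some $(x_1^-,x_2^-)\in(Y\times Z)^-=Y^-\times Z^-$ with $(x_1^-,x_2^-)[-\abs{w_1},-1]=(w_1,w_2)$, $L(x_1^-,x_2^-)=L(Y\times Z)$ and $\foll_{Y\times Z}(x_1^-,x_2^-)=\foll_{Y\times Z}(w_1,w_2)$. I claim $x_1^-$ witnesses that $w_1$ is half-synchronizing for $Y$. It lies in $Y^-$ and ends with $w_1$. Projecting the equalities $L^n(x_1^-,x_2^-)=L^n(Y)\times L^n(Z)$ onto the first coordinate (using $L^n(Z)\neq\emptyset$) shows that the length-$n$ subwords of $x_1^-$ are exactly those of $Y$, for every $n$, i.e. $L(x_1^-)=L(Y)$. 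Finally,
\[\foll_Y(x_1^-)\times\foll_Z(x_2^-)=\foll_{Y\times Z}(x_1^-,x_2^-)=\foll_{Y\times Z}(w_1,w_2)=\foll_Y(w_1)\times\foll_Z(w_2),\]
and since all four follower sets are nonempty (each contains a suitable tail of a configuration realizing the relevant left-infinite sequence, resp. a configuration in which the relevant word occurs), cancelling the factors $\foll_Z(x_2^-)$ and $\foll_Z(w_2)$ gives $\foll_Y(x_1^-)=\foll_Y(w_1)$. Hence $w_1$ is half-synchronizing and, being also transitive, $Y$ is half-synchronized; applying the argument to the second coordinate handles $Z$.

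The calculations are routine; the only point needing a little care — and the main (minor) obstacle — is justifying the componentwise formula for follower sets of a product and, relatedly, checking that the $Z$-follower sets one cancels are nonempty, which is precisely where the hypotheses that $Z$ is a nonempty subshift and that $(w_1,w_2)$ genuinely belongs to $L(Y\times Z)$ get used.
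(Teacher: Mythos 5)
Your proof is correct and follows essentially the same route as the paper, which simply notes that transitivity passes to factors and that the coordinates $w_1,w_2$ of a half-synchronizing word of $Y\times Z$ are ``easily seen'' to be half-synchronizing for $Y$ and $Z$. You have merely supplied the routine details (the componentwise description of languages and follower sets of a product, and the cancellation of nonempty factors) that the paper leaves implicit.
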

\begin{proof}
The transitivity of $Y\times Z$ implies the transitivity of $Y$ and $Z$. If $Y\times Z$ is half-synchronized with some half-synchronizing word $(w_1,w_2)$ ($w_1\in L(Y)$ and $w_2\in L(Z)$ of equal length), then $w_1$ and $w_2$ are easily seen to be half-synchronizing words of $Y$ and $Z$ respectively.
\end{proof}

\section{Canonical covers}

To any subshift $X$ it is possible to associate covers, i.e. labeled directed graphs such that the labels of all bi-infinite paths on the graph form a dense set on $X$. Some of these covers turn out to be canonical in the sense that any RCA on $X$ can be ``lifted'' in a unique way to a sliding block code on the set of labels of bi-infinite paths of the cover. Two such covers are Krieger graphs and Fischer graphs from~\cite{FF92}. We will recall the definitions and basic results.

A (directed) graph is a pair $\graph{G}=(V,E)$, where $V$ is the set of vertices and $E$ is the set of edges or arrows. Both of these sets may be infinite. Each edge $e\in E$ starts at an initial vertex denoted by $\init{e}$ and ends at a terminal vertex denoted by $\term{e}$. A word $p\in E^+$ is a path on $\graph{G}$ if for every $1\leq i<\abs{p}$ it holds that $\term{p[i]}=\init{p[i+1]}$. Similarly one defines right-infinite, left-infinite and bi-infinite paths, and the collection of all bi-infinite paths on $\graph{G}$ is denoted by $P(\graph{G})$. The initial vertex $\init{p}$ of a finite or right-infinite path $p$ is equal to the initial vertex of the first edge of $p$, and the terminal vertex $\term{p}$ of a finite or a left-infinite path $p$ is equal to the terminal vertex of the last edge of $p$. The graph $\graph{G}$ is strongly connected if for every pair of vertices $v,w\in V$ there is a finite path $p$ with $\init{p}=v$ and $\term{p}=w$.

The tensor product of directed graphs $\graph{G}_1=(V_1,E_2)$ and $\graph{G}_2=(V_2,E_2)$ is $\graph{G}_1\times\graph{G}_2=(V_1\times V_2,E_1\times E_2)$, where $\init{e_1,e_2}=(\init{e_1},\init{e_2})$ and $\term{e_1,e_2}=(\term{e_1},\term{e_2})$ for $e_i\in E_i$. If each $e_i$ has a label $a_i$, then the edge $(e_1,e_2)$ has the label $(a_1,a_2)$.

The Krieger graph $\graph{K}_X=(V_X,E_X)$ of a subshift $X\subseteq A^\Z$ is the graph with the vertex set $V_X=\{\foll(x^-)\mid x\in X\}$ and for all $x^-\in X^-$ and $a\in A$ such that $x^- a\in X^-$ there is an edge from $\foll(x^-)$ to $\foll(x^- a)$ labeled by $a$. We denote the label of any edge $e$ by $\lbl_X(e)$. The map $\lbl_X$ naturally extends to a map $\lbl_X:P(\graph{K}_X)\to X$, which replaces each edge of a bi-infinite path by its label. It is easy to see that $\lbl_X(P(\graph{K}_X))=X$. It is also easy to see that for subshifts $X$, $Y$ and $Z$ satisfying $X=Y\times Z$ it holds that $\graph{K}_X=\graph{K}_Y\times\graph{K}_Z$.

If $X$ is half-synchronized with a half-synchronizing word $w\in L(X)$, then the Fischer graph $\graph{F}_X$ of $X$ is the maximal strongly connected subgraph of $\graph{K}_X$ containing the vertex $\foll(w)$ (which by Definition~\ref{halfsynchDef} is indeed a vertex of $\graph{K}_X$). The labeling map for this graph is the restriction of the map $\lbl_X$ of the previous paragraph. It is shown on pages 146--147 of~\cite{FF92} that for any pair $w,w'\in L(X)$ of half-synchronizing words the Krieger graph of $X$ contains a path from $\foll(w)$ to $\foll(w')$. From this it follows that $\graph{F}_X$ does not depend on the choice of the half-synchronizing word $w$, so we may speak of \emph{the} Fischer graph of $X$.

If $w\in L(X)$ is half-synchronizing, then a vertex $v$ of $\graph{K}_X$ belongs to $\graph{F}_X$ precisely if there is a path from $\foll(w)$ to $v$. Namely, if $p$ is a finite path from $\foll(w)$ on $\graph{K}_X$, then $w\lbl_X(p)$ is also a half-synchronizing word and $\term{p}=\foll(w\lbl_X(p))$, so by the previous paragraph there is also a path from $\term{p}$ to $\foll(w)$. By transitivity every word of $L(X)$ is a label of some path starting at $\foll(w)$, so the set $\lbl_X(\graph{F}_X)$ is dense in $X$.

If $\graph{F}_X$ is finite, then $\lbl_X(\graph{F}_X)$ is closed and therefore equal to $X$. Sofic subshifts are characterized as those subshifts that are equal to the set of labels of some finite directed graph, so a non-sofic half-synchronized subshift necessarily has an infinite Fischer graph.

It turns out that Fischer graphs respect products.

\begin{lemma}\label{prodFischer}
If $X$, $Y$ and $Z$ are half-synchronized subshifts such that $X=Y\times Z$, then $\graph{F}_X=\graph{F}_Y\times\graph{F}_Z$.
\end{lemma}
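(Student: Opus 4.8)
The plan is to build on the two structural facts already in place: $\graph{K}_X=\graph{K}_Y\times\graph{K}_Z$, and for a half-synchronized subshift the Fischer graph is the maximal strongly connected subgraph of its Krieger graph containing the distinguished vertex $\foll(w)$ of any half-synchronizing word $w$, equivalently the induced subgraph on the set of vertices reachable from $\foll(w)$. First I would fix a half-synchronizing word $w=(w_1,w_2)$ of $X$ with $w_1\in L(Y)$, $w_2\in L(Z)$ and $\abs{w_1}=\abs{w_2}$; as in the proof of Lemma~\ref{halfsynchprod} the words $w_1,w_2$ are half-synchronizing for $Y,Z$ (a common witnessing left-infinite sequence splitting coordinatewise), and from $\foll_X((a^-,b^-))=\foll_Y(a^-)\times\foll_Z(b^-)$ the distinguished vertex $\foll_X(w)$ of $\graph{F}_X$ is, under the identification $V(\graph{K}_X)=V(\graph{K}_Y)\times V(\graph{K}_Z)$, the pair $(\foll_Y(w_1),\foll_Z(w_2))$ of distinguished vertices of $\graph{F}_Y$ and $\graph{F}_Z$.

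The inclusion $\graph{F}_X\subseteq\graph{F}_Y\times\graph{F}_Z$ is the routine half: a finite path in $\graph{K}_Y\times\graph{K}_Z$ starting at $(\foll_Y(w_1),\foll_Z(w_2))$ projects onto finite paths in $\graph{K}_Y$ and $\graph{K}_Z$ starting at $\foll_Y(w_1)$ and $\foll_Z(w_2)$, so every vertex reachable from $\foll_X(w)$, hence every edge of $\graph{F}_X$, lies in $\graph{F}_Y\times\graph{F}_Z$.

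For the reverse inclusion I must show that every vertex $(u,t)$ with $u\in V(\graph{F}_Y)$, $t\in V(\graph{F}_Z)$ is reachable from $(\foll_Y(w_1),\foll_Z(w_2))$ in the tensor product. Since an edge of $\graph{K}_X$ is a \emph{pair} of edges, a path of length $n$ in $\graph{K}_Y\times\graph{K}_Z$ is precisely a pair of length-$n$ paths, one in each factor; so I need the set of lengths of paths $\foll_Y(w_1)\to u$ in $\graph{F}_Y$ to meet the set of lengths of paths $\foll_Z(w_2)\to t$ in $\graph{F}_Z$. Inserting enough cycles through $u$ (resp.\ $t$) and using strong connectivity, each of these length sets contains a full one-sided arithmetic progression with common difference the period $g_Y$ of $\graph{F}_Y$ (resp.\ $g_Z$ of $\graph{F}_Z$), so the two sets meet as soon as $\gcd(g_Y,g_Z)=1$. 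Equivalently, the whole reverse inclusion is the assertion that the tensor product $\graph{F}_Y\times\graph{F}_Z$ of these two strongly connected graphs is again strongly connected; granting this, maximality of $\graph{F}_X$ among strongly connected subgraphs of $\graph{K}_X$ through $\foll_X(w)$ gives $\graph{F}_Y\times\graph{F}_Z\subseteq\graph{F}_X$, and combined with the previous paragraph this finishes the proof.

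So the crux — and the step I expect to be the main obstacle — is to prove $\gcd(g_Y,g_Z)=1$, which is exactly where the hypothesis that $X=Y\times Z$ is half-synchronized (hence transitive), and not merely an arbitrary product of two half-synchronized subshifts, has to be used. If $\gcd(g_Y,g_Z)=g>1$, then $\graph{F}_Y\times\graph{F}_Z$ breaks up into $g$ pairwise non-communicating strongly connected components, distinguished by the residue modulo $g$ of $\pi_Y(u)-\pi_Z(t)$ where $\pi_Y,\pi_Z$ are the phase functions of the cyclic decompositions of $\graph{F}_Y,\graph{F}_Z$; then $\graph{F}_X$ would be one such component, and I would derive a contradiction with the density of $\lbl_X(\graph{F}_X)$ in $X$ by exhibiting a configuration of $Y\times Z$ whose relative phase is ``wrong'', hence which is not approximable by labels of bi-infinite paths in $\graph{F}_X$ (the distinguished half-synchronizing word $w$ being used to read off the phase of a configuration from the positions of $w_1$ and $w_2$ in it). Alternatively one may quote the standard facts that the period of $\graph{F}_Y$ equals the period of $Y$ and that a product of transitive subshifts is transitive only if the factors have coprime periods. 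Either way, the careful bookkeeping of phases in the possibly infinite Fischer graphs of half-synchronized — as opposed to merely synchronized — subshifts is the delicate point.
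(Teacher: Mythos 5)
Your reduction is clean up to a point: the easy inclusion is right, and you correctly observe that $\graph{F}_Y\times\graph{F}_Z$ contains the distinguished vertex $(\foll_Y(w_1),\foll_Z(w_2))=\foll_X(w)$, so by maximality the reverse inclusion is exactly the assertion that the tensor product of the two strongly connected Fischer graphs is strongly connected, i.e.\ that the periods $g_Y,g_Z$ are coprime. The genuine gap is that you never prove $\gcd(g_Y,g_Z)=1$; you only name it as the crux and gesture at two routes, neither of which is available off the shelf here. The ``standard facts'' you propose to quote (that the period of $\graph{F}_Y$ equals the period of $Y$, and that a product of transitive subshifts forces coprime periods) are standard for sofic shifts with finite Fischer covers, but the whole point of this lemma is the non-sofic half-synchronized case, where ``the period of $Y$'' is not even obviously well defined and its relation to the gcd of cycle lengths in an infinite cover needs proof. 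Your density/phase-reading route has a concrete obstruction too: for a merely \emph{half}-synchronizing word $w_1$, not every occurrence of $w_1$ in a configuration leads to the vertex $\foll_Y(w_1)$ (that is the synchronizing condition), and a single configuration can label paths lying in different phase classes, so ``the relative phase of a configuration'' is not well defined and the contradiction with density of $\lbl_X(P(\graph{F}_X))$ does not follow as sketched.

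The paper sidesteps periods entirely. Given paths $p_1,p_2$ from $\foll_Y(w_1)$ to $v_1$ and from $\foll_Z(w_2)$ to $v_2$ with, say, $\abs{p_1}\leq\abs{p_2}$, it pads the shorter one \emph{backwards} along the witnessing left-ray $y^-$ of the half-synchronizing word $w_1$: the suffixes $w_1',w_2'$ of $y^-\lbl_Y(p_1)$ and $z^-\lbl_Z(p_2)$ of common length $\abs{w_2\lbl_Z(p_2)}$ satisfy $\foll_Y(w_1')=v_1$ and $\foll_Z(w_2')=v_2$, and one checks (using the set equality $X=Y\times Z$, which lets one glue arbitrary left-rays of $Y$ and $Z$ and pad words on either side) that $(w_1',w_2')$ is itself a half-synchronizing word of $X$; hence $(v_1,v_2)=\foll_X(w_1',w_2')$ is a vertex of $\graph{F}_X$ by the quoted fact that follower sets of half-synchronizing words all lie in the Fischer graph. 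In particular the coprimality you need comes out as a \emph{corollary} of the lemma rather than serving as an input to it. To salvage your argument you would have to supply an independent proof of $\gcd(g_Y,g_Z)=1$ for possibly non-sofic half-synchronized factors, which is real additional work; as written, the proof is incomplete at its decisive step.
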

\begin{proof}
Let $w_1$ and $w_2$ be equal length half-synchronizing words of $Y$ and $Z$ respectively such that $(w_1,w_2)$ is a half-synchronizing word of $X$. Let also $y^-\in Y^-$ and $z^-\in Z^-$ be such that they have suffixes $w_1$ and $w_2$ respectively and that $L(x^-)=L(X)$ for $x^-=(y^-,z^-)\in X^-$. 

To show that $\graph{F}_X$ is a subgraph of $\graph{F}_Y\times\graph{F}_Z$, let $(v_1,v_2)$ be a vertex of $\graph{F}_X$, meaning that there is a path $p=(p_1,p_2)$ from $\foll_X(w_1,w_2)=(\foll_Y(w_1),\foll_Z(w_2))$ to $(v_1,v_2)$ in $\graph{K}_X$. Therefore $p_1$ and $p_2$ are paths from $\foll_Y(w_1)$ to $v_1$ and $\foll_Z(w_2)$ to $v_2$ in $\graph{K}_Y$ and $\graph{K}_Y$ respectively, so $(v_1,v_2)$ is a vertex of $\graph{F}_Y\times\graph{F}_Z$.

To show that $\graph{F}_Y\times\graph{F}_Z$ is a subgraph of $\graph{F}_X$, let $(v_1,v_2)$ be a vertex of $\graph{F}_Y\times\graph{F}_Z$, meaning that there are paths $p_1$ and $p_2$ from $\foll_Y(w_1)$ to $v_1$ and $\foll_Z(w_2)$ to $v_2$ in $\graph{K}_Y$ and $\graph{K}_Z$ respectively. Without loss of generality assume that $\abs{p_1}\leq\abs{p_2}$. Let $w_1'$ and $w_2'$ be suffixes of length $\abs{w_2 \lbl_Z(p_2)}$ of $y^-\lbl_Y(p_1)$ and $z^-\lbl_Z(p_2)$ respectively: they have suffixes $w_1\lbl_Y(p_1)$ and $w_2\lbl_Z(p_2)$ respectively. Then
\begin{flalign*}
& v_1=\foll_Y(y^-\lbl_Y(p_1))=\foll_Y(w_1\lbl_Y(p_1))=\foll_Y(w_1')\mbox{ and} \\
& v_2=\foll_Z(z^-\lbl_Z(p_2))=\foll_Z(w_2\lbl_Y(p_2))=\foll_Z(w_2')
\end{flalign*}
To show that $(v_1,v_2)$ is a vertex of $\graph{F}_X$, it is therefore sufficient to show that $(w_1',w_2')$ is a half-synchronizing word of $X$. This in turn follows after we show that $L(x'^-)=L(X)$ for $x'^-=(y^-\lbl_Y(p_1),z^-\lbl_Z(p_2))\in X^-$. Let therefore $u=(u_1,u_2)\in L(X)$ be arbitrary, with $u_1\in L(Y)$ and $u_2\in L(Z)$ of equal length $n$, and choose any $u_1'\in L(Y)$ and $u_2'\in L(Z)$ of length $k=\abs{p_2}-\abs{p_1}$ such that $u_1 u_1'\in L(Y)$ and $u_2'u_2\in L(Z)$. Because $X=Y\times Z$, it follows that $u'=(u_1 u_1',u_2'u_2)\in L(X)$ and therefore $u'=x^-[i,i+n+k-1]$ for some $i\in\Z$ such that $i+n+k-1<0$. Thus
\begin{flalign*}
& u_1=y^-[i,i+n-1]=(y^-\lbl_Y(p_1))[i-\abs{p_1},i+n-\abs{p_1}-1]\mbox{ and} \\
& u_2=z^-[i+k, i+n+k-1]=(z^-\lbl_Z(p_2))[i-\abs{p_1}, i+n-\abs{p_1}-1],
\end{flalign*}
so $u$ occurs in $x'^-$ at position $i-\abs{p_1}$.
\end{proof}

Let $C,D$ be disjoint alphabets (not necessarily finite), interpret $CD$ and $DC$ to be new alphabets and let $X\subseteq (CD)^\Z$, $Y\subseteq(DC)^\Z$ be shift invariant (not necessarily closed or compact). A bijective map $F:X\to Y$ is a forward bipartite code (for partition $(C,D)$) if the image of any $x\in X$ with $x[i]=c_i d_i$ ($c_i\in C$, $d_i\in D$) satisfies $F(x)[i]=d_{i}c_{i+1}$. Similarly, $F$ is a backward bipartite code (for partition $(C,D)$) if always $F(x)[i]=d_{i-1}c_i$. If $X$ and $Y$ are subshifts, then a bipartite code $F:X\to Y$ is a conjugacy. We recall the following.

\begin{theorem}[\cite{Nasu86}, Theorem~2.4]\label{decomp}
Every conjugacy between subshifts can be represented as a composition of bipartite codes and bijective symbol maps that are applied coordinatewise to configurations.
\end{theorem}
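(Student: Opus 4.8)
The plan is to peel an arbitrary conjugacy down, within the class in question, to a bijective symbol map, using higher block presentations (which will themselves be shown to lie in the class) to control memory and anticipation, and using one bipartite code per inductive step to model a single in-splitting or out-splitting.

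\emph{Setup and easy reductions.} Let $\mathcal{C}$ denote the class of maps that can be written as a composition of (forward or backward) bipartite codes and bijective symbol maps applied coordinatewise. I would first record that $\mathcal{C}$ is closed under composition --- immediate --- and under taking inverses: the inverse of a bijective symbol map is a bijective symbol map, and the inverse of a forward bipartite code for a partition $(C,D)$ is, after the obvious identification of $CD$ with $DC$, a backward bipartite code, and symmetrically. I would also observe that $\sigma,\sigma^{-1}\in\mathcal{C}$, since, viewing a subshift $X\subseteq A^\Z$ as a subshift over the alphabet $A\{*\}$ (resp. $\{*\}A$) with $\abs{\{*\}}=1$, a forward (resp. backward) bipartite code followed by the symbol map $(*,a)\mapsto a$ (resp. $(a,*)\mapsto a$) is exactly $\sigma$ (resp. $\sigma^{-1}$). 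Hence it suffices to prove $F\in\mathcal{C}$ for an arbitrary conjugacy $F$, and, composing $F$ with a suitable power of $\sigma$, we may assume $F$ has memory $0$ and anticipation $n\geq 0$.

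\emph{A recoding lemma.} The crucial preliminary fact is that for every subshift $X$ and every $N\geq1$ the higher block presentation $\beta_N\colon X\to X^{[N]}$, $\beta_N(x)[i]=x[i,i+N-1]$, lies in $\mathcal{C}$; it is enough to handle $N=2$ and iterate. Let $\bar A$ be a disjoint copy of the alphabet $A$ of $X$. The symbol map $a\mapsto(a,\bar a)$ sends $X$ to a subshift over $A\bar A$ with partition $(C,D)=(A,\bar A)$; the forward bipartite code for this partition re-brackets it into a subshift over $\bar A A$ whose letter at position $i$ is $(\overline{x[i]},x[i+1])$; finally the map $(\bar a,b)\mapsto ab$, which is a bijection from the set $\{(\bar a,b):ab\in L^2(X)\}$ of letters that actually occur onto $L^2(X)$, is a symbol map turning this subshift into $X^{[2]}$. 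So $\beta_N\in\mathcal{C}$, hence also $\beta_N^{-1}\in\mathcal{C}$. Now if $F\colon X\to Y$ has memory $0$ and anticipation $n$, then $F\circ\beta_{n+1}^{-1}\colon X^{[n+1]}\to Y$ is a $1$-block code, because the length-$(n+1)$ window read by the local rule of $F$ is encoded inside the single letter $\beta_{n+1}(x)[i]$; since $F=(F\circ\beta_{n+1}^{-1})\circ\beta_{n+1}$ with $\beta_{n+1}\in\mathcal{C}$, we have reduced to the case where $F$ is a $1$-block conjugacy.

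\emph{The main step.} So let $F\colon X\to Y$ be a $1$-block conjugacy, $F(x)[i]=\phi(x[i])$, and let $[-p,q]$ be the window of $F^{-1}$. I would induct on $p+q$. If $p=q=0$ then $F^{-1}$ is also $1$-block, so $\phi$ is a bijection between the occurring alphabets and $F$ is itself a bijective symbol map. For the inductive step one peels off a single bipartite code: reading the local rules of $F$ and $F^{-1}$ (after, if necessary, one further application of the recoding lemma so as to refine a letter of $X$ or of $Y$ into a product $C\times D$ adapted to those rules), one writes $F=K\circ H$ (or $H\circ K$) with $K\in\mathcal{C}$ built from one bipartite code and some symbol maps, and $H$ a $1$-block conjugacy whose inverse has strictly smaller window --- the point-set version of performing one in-splitting or out-splitting of a presentation, which, unlike the classical Williams argument, does not require the presentation to be finite. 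This inductive step is the main obstacle. Its content is not the construction of the intermediate subshift but the verification that each relabelling used is a genuine bijection on the letters that occur; this is exactly the point where the invertibility of $F$ --- i.e. the identities $F^{-1}\circ F=\mathrm{id}$ and $F\circ F^{-1}=\mathrm{id}$ evaluated at a single coordinate or a pair of adjacent coordinates --- has to be invoked to exclude collisions between letters. Once this step is in place, the preceding reductions complete the proof.
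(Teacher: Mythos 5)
The paper does not prove Theorem~\ref{decomp} at all; it is imported as a black box from Nasu's work, so there is no internal proof to compare against and your attempt has to stand on its own. Your preliminary reductions are correct and well executed: the closure of the class $\mathcal{C}$ under composition and inversion, the realization of $\sigma^{\pm1}$ as a bipartite code sandwiched between symbol maps, the factorization of the higher block presentation $\beta_N$ through one bipartite code per level, and the resulting reduction to a $1$-block conjugacy are all sound.

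The problem is that after these reductions nothing of substance has been proved: the entire content of Nasu's theorem now sits inside your ``main step,'' and that step is asserted rather than carried out. You claim that a $1$-block conjugacy $F$ whose inverse has window $[-p,q]$ with $p+q\geq 1$ factors as $F=K\circ H$ (or $H\circ K$) with $K$ a bipartite code up to symbol maps and $H$ a $1$-block conjugacy whose inverse has strictly smaller window, but you do not define the intermediate alphabet, the partition $(C,D)$, or the map $H$, and you explicitly defer the verification that the relabellings are bijective --- which you yourself identify as the point where invertibility of $F$ must be used. That verification is not routine bookkeeping; it is the theorem. There is also a concrete danger in the phrase ``after, if necessary, one further application of the recoding lemma'': composing $F$ with $\beta_N^{-1}$ on the domain replaces $F^{-1}$ by $\beta_N\circ F^{-1}$, whose window is $[-p,\,q+N-1]$, so the recoding you invoke \emph{increases} your induction measure $p+q$, and you give no argument that the subsequent bipartite-code extraction compensates for this. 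Until the inductive step is written out with an explicit intermediate subshift and a proof that the window genuinely shrinks, the argument is a plan, not a proof.
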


We say that a given map $F:X\to X$ lifts to a map $F':P(\graph{K}_X)\to P(\graph{K}_X)$ (resp. $F':P(\graph{F}_X)\to P(\graph{F}_X)$) if $\lbl_X(F'(x))=F(\lbl_X(x))$ for $x\in P(\graph{K}_X)$ (resp. for $x\in P(\graph{F}_X)$). It is known that bipartite codes between subshifts lift to bipartite codes between the path sets of Krieger graphs and Fischer graphs. We will recall the details of this.

\begin{definition}\label{indGraph}
Let $\graph{G}=(V,E)$ be a bipartite graph with partitions $V=V_1\cup V_2$ and $E=C\cup D$ such that each edge of $C$ goes from $V_1$ to $V_2$ and each edge of $D$ goes from $V_2$ to $V_1$. If $E_1\subseteq CD$ is the collection of paths of length 2 starting at $V_1$ and $E_2\subseteq DC$ is the collection of paths of length 2 starting at $V_2$, then $\graph{G}_1=(V_1,E_1)$ and $\graph{G}_2=(V_2,E_2)$ is an induced pair of graphs (of $\graph{G})$.
\end{definition}

The following lemma is essentially from~\cite{Nasu86}.

\begin{lemma}[\cite{Nasu86}, Corollary~4.6]\label{indKrieger}
Assume that there is a bipartite code $F:X\to Y$ between subshifts. Then $\graph{K}_X$ and $\graph{K}_Y$ are an induced pair of graphs up to renaming the edges and $F$ lifts to a bipartite code $F':P(\graph{K}_X)\to P(\graph{K}_Y)$.
\end{lemma}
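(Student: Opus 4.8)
The plan is to realize $(\graph{K}_X,\graph{K}_Y)$ as the induced pair of the Krieger graph of one auxiliary subshift over $C\cup D$. Assume $F$ is a forward bipartite code (the backward case is symmetric). For $x\in X$ with $x[i]=c_id_i$ ($c_i\in C$, $d_i\in D$), define its \emph{splitting} $\hat x\in(C\cup D)^\Z$ by $\hat x[2i]=c_i$ and $\hat x[2i+1]=d_i$; then $x\mapsto\hat x$ is continuous and injective, and unwinding the definition of a forward bipartite code gives $\widehat{F(x)}=\sigma(\hat x)$. Since $X$ is compact and shift-invariant and $F(X)=Y$, the orbit closure $W$ of $\{\hat x\mid x\in X\}$ equals $\{\hat x\mid x\in X\}\cup\{\hat y\mid y\in Y\}$ and is therefore a subshift. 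Every left-infinite sequence in $W^{-}$ strictly alternates between symbols of $C$ and of $D$, and regrouping the symbols of a configuration of $W$ into consecutive pairs --- aligned so that each pair lies in $CD$, respectively in $DC$ --- recovers a configuration of $X$, respectively of $Y$.

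The first step is to match follower sets. If $\hat x^{-}\in W^{-}$ ends in a symbol of $D$, then $\hat x^{-}=\widehat{x^{-}}$ for a unique $x^{-}\in X^{-}$, and pairing up the (necessarily $CD$-aligned) futures gives a bijection between $\foll_W(\hat x^{-})$ and $\foll_X(x^{-})$; thus $\foll_W(\hat x^{-})$ depends only on, and determines, $\foll_X(x^{-})$. If instead $\hat x^{-}$ ends in a symbol of $C$, then it equals $\widehat{x^{-}}c$ for some $x^{-}\in X^{-}$ and $c\in C$, and also $\widehat{y^{-}}$ for a unique $y^{-}\in Y^{-}$; hence $\foll_W(\hat x^{-})$ is in bijection with $\foll_Y(y^{-})$, while $\foll_W(\widehat{x^{-}}c)$ is determined by which elements of $\foll_X(x^{-})$ begin with a pair whose first component is $c$ --- in particular by the pair $(\foll_X(x^{-}),c)$ alone. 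It follows that the vertex set of $\graph{K}_W$ splits into $V_1$ (follower sets of sequences ending in $D$) and $V_2$ (those ending in $C$), that the $C$-labeled edges of $\graph{K}_W$ run from $V_1$ to $V_2$ and the $D$-labeled ones from $V_2$ to $V_1$, and that the bijections just described identify $V_1$ with $V_X$ and $V_2$ with $V_Y$.

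Next, under these identifications a length-$2$ path from $V_1$ --- a $C$-edge labeled $c$ followed by a $D$-edge labeled $d$ --- corresponds precisely to the edge of $\graph{K}_X$ labeled $(c,d)\in C\times D\cong CD$, and symmetrically the length-$2$ paths from $V_2$ are precisely the edges of $\graph{K}_Y$ labeled by $DC\cong D\times C$. This correspondence preserves labels and endpoints, so $(\graph{K}_X,\graph{K}_Y)$ is the induced pair of $\graph{K}_W$ (in the sense of Definition~\ref{indGraph}), up to renaming edges. For the lift, note that since $\graph{K}_W$ is bipartite every bi-infinite path in it alternates between $V_1$ and $V_2$: grouping its edges into consecutive pairs starting at a $C$-edge yields a bi-infinite path of $\graph{K}_X$, grouping them starting at a $D$-edge yields one of $\graph{K}_Y$, and each of these groupings is a bijection onto $P(\graph{K}_X)$, respectively $P(\graph{K}_Y)$. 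Let $F'\colon P(\graph{K}_X)\to P(\graph{K}_Y)$ be the composition ``unfold into a path of $\graph{K}_W$, then regroup''; by construction $F'$ is a forward bipartite code for the edge-partition $(C,D)$, and because the label of a $\graph{K}_W$-path regrouped at the $C$-edges (respectively the $D$-edges) is the corresponding configuration of $X$ (respectively of $Y$) and $\widehat{F(x)}=\sigma(\hat x)$, it satisfies $\lbl_Y(F'(p))=F(\lbl_X(p))$; that is, $F'$ lifts $F$.

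I expect the main obstacle to lie in the first step: checking that the follower sets of $W$ are exactly as fine as those of $X$ and $Y$, i.e.\ that $\foll_W(\hat x^{-})$ is a function of $\foll_X(x^{-})$ rather than merely of $x^{-}$ (and conversely), and that the ``intermediate'' follower sets $\foll_W(\widehat{x^{-}}c)$ depend only on $(\foll_X(x^{-}),c)$. This is exactly what guarantees that ``splitting each edge of $\graph{K}_X$ in two'' creates neither too few nor too many vertices, hence produces precisely $\graph{K}_W$; once it is established, identifying the induced pair and building the lifted code are routine bookkeeping through the $CD\leftrightarrow C\times D$ identification.
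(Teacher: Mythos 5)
Your proof is correct and follows essentially the same route as the paper: both construct the auxiliary interleaved subshift over $C\cup D$ (the paper's $Z$, your $W$), observe that its Krieger graph has $\graph{K}_X$ and $\graph{K}_Y$ as its induced pair, and read off the lifted bipartite code by regrouping paths. The only difference is that you spell out the follower-set matching that the paper dismisses as ``easily seen.''
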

\begin{proof}
Assume without loss of generality that $F$ is a forward bipartite code for partition $(A,B)$. Define the subshift
\begin{flalign*}
Z=&\{z\in (A\cup B)^\Z\mid\mbox{For some }x\in X\mbox{ and for all }i\in\Z, z[2i,2i+1]=x[i] \} \\
\cup&\{z\in (A\cup B)^\Z\mid\mbox{For some }y\in Y\mbox{ and for all }i\in\Z, z[2i,2i+1]=y[i] \}.
\end{flalign*}
In other words, we get $Z$ from configurations of $X$ and $Y$ by interpreting the symbols of $AB$ and $BA$ as pairs of symbols. It is then easily seen that $\graph{K}_X$ and $\graph{K}_Y$ are an induced pair of graphs of the Krieger graph $\graph{K}_Z$. Let $C$ be the set of edges of $\graph{K}_Z$ from $\graph{K}_X$ to $\graph{K}_Y$ and let $D$ be the set of edges from $\graph{K}_Y$ to $\graph{K}_X$: then the edges of $\graph{K}_X$ and $\graph{K}_Y$ can be renamed by elements of $CD$ and $DC$ respectively and the forward bipartite code $F':P(\graph{K}_X)\to P(\graph{K}_Y)$ between these graphs with renamed edges is clearly a lift of $F$.
\end{proof}

The map $F'$ of the previous lemma is in fact the unique continuous surjective lift of $F$, which is a consequence of the following theorem.

\begin{theorem}[\cite{FF92}, Theorems~2.11 and~2.12]\label{canonical}
A conjugacy $F:X\to Y$ between subshifts lifts to a unique continuous surjective map $F':P(\graph{K}_X)\to P(\graph{K}_Y)$ such that $\lbl_Y\circ F'=F\circ \lbl_X$. This map is invertible, uniformly continuous, and its inverse is uniformly continuous (i.e. both $F'$ and its inverse can be represented as sliding block codes). If $X$ and $Y$ are half-synchronized, the map $F'$ restricts to a bijection from $P(\graph{F}_X)$ to $P(\graph{F}_Y)$.
\end{theorem}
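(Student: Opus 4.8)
The plan is to obtain the lift $F'$ together with its regularity properties by decomposing $F$ into elementary conjugacies via Theorem~\ref{decomp}, and to obtain uniqueness from the fact that the Krieger labeling map is injective on a suitably large set of paths. For existence, write $F=F_n\circ\dots\circ F_1$ as a composition of conjugacies $F_j\colon X_{j-1}\to X_j$ (with $X_0=X$, $X_n=Y$) each of which is either a coordinatewise bijective symbol map or a bipartite code. If $F_j$ is induced by an alphabet bijection $\phi$, then $\graph K_{X_j}$ is just $\graph K_{X_{j-1}}$ with each edge label $a$ replaced by $\phi(a)$ (the follower sets are only renamed), so relabeling edges gives a bijection $F_j'\colon P(\graph K_{X_{j-1}})\to P(\graph K_{X_j})$ which is a sliding block code with sliding block inverse and satisfies $\lbl_{X_j}\circ F_j'=F_j\circ\lbl_{X_{j-1}}$. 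If $F_j$ is a bipartite code, Lemma~\ref{indKrieger} provides such an $F_j'$ which is moreover a bipartite code between the path spaces, hence again a bijective sliding block code whose inverse, a backward bipartite code, is a sliding block code. The composite $F'=F_n'\circ\dots\circ F_1'$ is then a bijective sliding block code with sliding block inverse, so both $F'$ and $F'^{-1}$ are uniformly continuous, and telescoping the identities $\lbl_{X_j}\circ F_j'=F_j\circ\lbl_{X_{j-1}}$ gives $\lbl_Y\circ F'=F\circ\lbl_X$; in particular a continuous surjective lift exists.

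For uniqueness, note first that $\graph K_X$ is right-resolving: from the definitions $\foll_X(x^-a)=\{y^+\in X^+\mid ay^+\in\foll_X(x^-)\}$ depends only on $\foll_X(x^-)$ and on $a$, so a vertex of $\graph K_X$ emits at most one edge with a given label. Using this together with the description of the vertices of $\graph K_X$ as follower sets, one checks that if $x\in X$ is doubly transitive, i.e.\ every word of $L(X)$ occurs infinitely often both in $x[-\infty,-1]$ and in $x[0,\infty]$, then the only bi-infinite path labeled by $x$ is the path through the vertices $\foll_X(x[-\infty,i])$, $i\in\Z$. Thus $\lbl_X$ is injective on the (dense $G_\delta$) set $D_X$ of doubly transitive points, and the paths over $D_X$ are dense in $P(\graph K_X)$. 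Now let $F_1',F_2'$ be two continuous surjective lifts of $F$. The set $\{p\mid F_1'(p)=F_2'(p)\}$ is closed, and it contains every path $p$ whose label $x$ lies in $D_X$ and satisfies $F(x)\in D_Y$, since for such $p$ both $F_1'(p)$ and $F_2'(p)$ must equal the unique path labeled $F(x)$. As a conjugacy maps doubly transitive points to doubly transitive points in both directions, the paths over $D_X\cap F^{-1}(D_Y)$ are still dense in $P(\graph K_X)$, whence $F_1'=F_2'$; applied to the $F'$ of the first paragraph this identifies $F'$ as the unique continuous surjective lift.

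Finally, assume all the subshifts above are half-synchronized; I would check that $F'$ restricts to a bijection $P(\graph F_X)\to P(\graph F_Y)$, for which it suffices that each elementary lift $F_j'$ restricts to a bijection $P(\graph F_{X_{j-1}})\to P(\graph F_{X_j})$. For a symbol relabeling this is immediate, since relabeling edges carries the maximal strongly connected subgraph through a half-synchronizing vertex of $\graph K_{X_{j-1}}$ onto that of $\graph K_{X_j}$. For a bipartite code $F_j$, the Krieger graphs $\graph K_{X_{j-1}}$ and $\graph K_{X_j}$ form the induced pair of graphs of the Krieger graph of the auxiliary subshift $Z$ used in the proof of Lemma~\ref{indKrieger}; this $Z$ is transitive and half-synchronized, and its Fischer graph $\graph F_Z$ induces precisely the pair $\graph F_{X_{j-1}},\graph F_{X_j}$ in the sense of Definition~\ref{indGraph}, so the bipartite code $F_j'$ restricts to a bijection between their path spaces. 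Composing these restrictions yields the claim for $F'$.

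The step I expect to be the main obstacle is the claim, used in the uniqueness argument, that $\lbl_X$ is injective on the set $D_X$ of doubly transitive points and that the paths over $D_X$ are dense in $P(\graph K_X)$ — equivalently, that an arbitrary follower-set vertex of $\graph K_X$ lies on, and can be left along, a fully transitive bi-infinite path. This requires a careful combination of the transitivity of $X$ with the concrete description of the vertices of $\graph K_X$ as follower sets, and it is precisely the content of the results of~\cite{FF92} cited in the statement.
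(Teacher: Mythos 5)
The paper gives no proof of this statement at all: it is imported verbatim from \cite{FF92} (Theorems 2.11 and 2.12), so there is nothing in-paper to compare your argument against. Your existence-and-regularity construction is nonetheless the standard route and is sound: decompose $F$ via Theorem~\ref{decomp}, lift symbol maps by relabeling and bipartite codes by Lemma~\ref{indKrieger}, observe that each elementary lift is a bijective sliding block code with sliding block inverse, and telescope the intertwining identities. This mirrors exactly the machinery the paper itself assembles around Lemmas~\ref{indKrieger} and~\ref{indFischer} (and you correctly avoid invoking Lemma~\ref{indFischer}, which the paper derives \emph{from} the present theorem).

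The genuine gap is where you flag it, and it is not a removable formality. Uniqueness rests on two unproven claims: (i) $\lbl_X$ is injective over doubly transitive points, and (ii) the paths carrying such labels are dense in $P(\graph{K}_X)$. Claim (ii) does not follow from density of doubly transitive points in $X$: the preimage of a dense set under the continuous surjection $\lbl_X$ need not be dense, and what is really required is that every finite path of $\graph{K}_X$, terminating at an arbitrary follower-set vertex $\foll(z^-)$, extends forward inside that follower set to a right-transitive ray and backward to a left-transitive label --- a vertex of the full Krieger graph (unlike a vertex of the Fischer graph, which is the follower set of a half-synchronizing word) gives no a priori control over this. Moreover the theorem is stated for arbitrary subshifts, where doubly transitive points may not exist at all, so the argument would at best cover the transitive case. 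Deferring precisely this step to ``the results of \cite{FF92} cited in the statement'' makes the attempt circular as a proof of the cited theorem; a non-circular uniqueness argument must either establish (i)--(ii) from scratch or replace them (e.g., by showing that the only continuous surjective self-map of $P(\graph{K}_X)$ commuting with $\lbl_X$ is the identity). Two smaller debts remain in the Fischer-graph paragraph: you need that half-synchronization is a conjugacy invariant (so that the intermediate subshifts $X_j$ are half-synchronized at all), and that the auxiliary bipartite subshift $Z$ from the proof of Lemma~\ref{indKrieger} is half-synchronized with $\graph{F}_Z$ inducing exactly the pair $\graph{F}_{X_{j-1}},\graph{F}_{X_j}$; both are true but are themselves results of \cite{FF92} and \cite{Nasu86} of comparable depth to what is being proven, and are asserted here without argument.
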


\begin{lemma}[\cite{Nasu86}, Corollary~4.8]\label{indFischer}
Assume that there is a bipartite code $F:X\to Y$ between half-synchronized subshifts. Then $\graph{F}_X$ and $\graph{F}_Y$ are an induced pair of graphs up to renaming the edges and $F$ lifts to a bipartite code $F':P(\graph{F}_X)\to P(\graph{F}_Y)$.
\end{lemma}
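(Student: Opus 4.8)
The plan is to repeat the argument of Lemma~\ref{indKrieger} one level down, from Krieger graphs to Fischer graphs, using Theorem~\ref{canonical} to control the Fischer part. I would keep the subshift $Z$ and the Krieger graph $\graph{K}_Z$ built in the proof of Lemma~\ref{indKrieger}, so that $\graph{K}_X$ and $\graph{K}_Y$ are, up to renaming edges, the induced pair of the bipartite graph $\graph{K}_Z$; as in that proof $\graph{K}_Z$ has a vertex partition $V(\graph{K}_Z)=V_1\cup V_2$ with $V_1=V(\graph{K}_X)$, $V_2=V(\graph{K}_Y)$, and every edge of $\graph{K}_Z$ joins $V_1$ and $V_2$. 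By Lemma~\ref{indKrieger}, $F$ lifts to a bipartite code $F'\colon P(\graph{K}_X)\to P(\graph{K}_Y)$; by Theorem~\ref{canonical} this $F'$ is the unique continuous surjective lift of $F$, and it restricts to a bijection $P(\graph{F}_X)\to P(\graph{F}_Y)$. Since a restriction of a bipartite code obeys the same local rule, the only thing left to prove is that $\graph{F}_X$ and $\graph{F}_Y$, up to renaming edges, form an induced pair of graphs — the ``lifts to a bipartite code'' clause then follows from Theorem~\ref{canonical}.

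To produce the induced pair I would fix a half-synchronizing word $w_1$ of $X$, set $v_0=\foll_X(w_1)\in V_1$, and let $\graph{F}_Z$ be the strongly connected component of $v_0$ in $\graph{K}_Z$ (equivalently, the maximal strongly connected subgraph containing $v_0$); let $(\graph{H}_1,\graph{H}_2)$ be its induced pair. The first claim is $\graph{H}_1=\graph{F}_X$, which is bipartite bookkeeping: as all edges of $\graph{K}_Z$ run between $V_1$ and $V_2$, a $\graph{K}_Z$-path between two vertices of $V_1$ has even length and decomposes into length-$2$ blocks, giving a $\graph{K}_X$-path, and conversely; hence a vertex $v\in V_1$ is mutually reachable with $v_0$ in $\graph{K}_Z$ exactly when $v\in\graph{F}_X$, so $\graph{H}_1$ and $\graph{F}_X$ have the same vertices, and as each is the ``saturated'' strongly connected subgraph on that vertex set (it contains every ambient edge between its vertices) they also have the same edges — here one uses that the intermediate $V_2$-vertex of a length-$2$ block realizing an edge of $\graph{F}_X$ lies on a cycle inside $\graph{F}_Z$.

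The main obstacle is the companion claim $\graph{H}_2=\graph{F}_Y$: $\graph{F}_Z$ was defined through the $V_1$-vertex $v_0$, with no distinguished $V_2$-vertex in sight, and there is no reason a convenient half-synchronizing witness on the $Y$-side can be read off directly (in a non-sofic shift a left-infinite sequence need not determine its follower set by a finite suffix). I would get around this via path sets. Blocking a bi-infinite $\graph{K}_Z$-path at its $V_1$-vertices (resp. $V_2$-vertices) defines a bijection $P(\graph{K}_Z)\to P(\graph{K}_X)$ (resp. $P(\graph{K}_Z)\to P(\graph{K}_Y)$), and in these coordinates $F'$ is precisely the ``shift the blocking by one'' map. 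Restricting these bijections to $P(\graph{F}_Z)$ yields $P(\graph{F}_Z)\cong P(\graph{H}_1)=P(\graph{F}_X)$ and $P(\graph{F}_Z)\cong P(\graph{H}_2)$, so $F'$ restricts to a bijection $P(\graph{F}_X)\to P(\graph{H}_2)$; comparing with the bijection $P(\graph{F}_X)\to P(\graph{F}_Y)$ supplied by Theorem~\ref{canonical} gives $P(\graph{H}_2)=P(\graph{F}_Y)$. Finally $\graph{H}_2$ and $\graph{F}_Y$ are both strongly connected subgraphs of $\graph{K}_Y$ with nonempty path sets, and a strongly connected graph is recovered from its set of bi-infinite paths — every edge lies on a cycle and hence on a bi-infinite path, so the edge set, and then the vertex set, is determined — so $\graph{H}_2=\graph{F}_Y$. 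Thus $(\graph{F}_X,\graph{F}_Y)$ is, up to renaming edges, the induced pair of $\graph{F}_Z$, which together with the first paragraph completes the proof.
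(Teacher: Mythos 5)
Your proof is correct and follows essentially the same route as the paper: both obtain the restriction of the Krieger-level lift $F'$ to a bijection $P(\graph{F}_X)\to P(\graph{F}_Y)$ from Lemma~\ref{indKrieger} and Theorem~\ref{canonical}, and then deduce the induced-pair structure of $\graph{F}_X$ and $\graph{F}_Y$. The paper disposes of that last step in one sentence (``the existence of this bipartite code guarantees that $\graph{F}_X$ and $\graph{F}_Y$ are an induced pair''), whereas you verify it explicitly via the strongly connected component $\graph{F}_Z$ of $\graph{K}_Z$ and the identifications $\graph{H}_1=\graph{F}_X$, $\graph{H}_2=\graph{F}_Y$; this is a correct filling-in of detail rather than a different method.
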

\begin{proof}
By Theorem~\ref{canonical} the map $F'$ of Lemma~\ref{indKrieger} restricts to a bijective bipartite code from $P(\graph{F}_X)$ to $P(\graph{F}_Y)$. The existence of this bipartite code guarantees that $\graph{F}_X$ and $\graph{F}_Y$ are an induced pair.
\end{proof}

\section{A Sufficient Criterion for Direct Primeness}

In this section we present a sufficient criterion of direct primeness for half-synchronized subshifts based on their Fischer graphs. First we define a few special types of paths on graphs.

\begin{definition}\label{qprox}
Let $\graph{G}=(V,E)$ be a graph and let $x,y\in P(\graph{G})$. We say that $x,y$ is a \emph{proximal pair} on $\graph{G}$ if for any $n\in\N$ there is $i\in\N$ such that  $x[i,i+n]=y[i,i+n]$ and a \emph{strictly proximal pair} if additionally $x[i]\neq y[i]$ for arbitrarily large $i\in\N$.
\end{definition}

\begin{definition}
A finite path $p\in E^+$ on a graph $\graph{G}=(V,E)$ is a \emph{geodesic} if it is a shortest path between vertices $\init{p}$ and $\term{p}$, and a right-infinite path $p\in E^\N$ is geodesic if all its finite subpaths are geodesics. A path $x\in P(\graph{G})$ is \emph{eventually geodesic} if there exists $i_0\in\Z$ such that $x[i,i+n]$ is a geodesic for all $i\geq i_0$ and $n\in\N$.
\end{definition}

It turns out that the property of being a strictly proximal pair or an eventually geodesic path is preserved under bipartite codes.

\begin{lemma}\label{geoproxLemma}
Let $\graph{G}_1=(V_1,E_1)$ and $\graph{G}_2=(V_2,E_2)$ be an induced pair of graphs of $\graph{G}=(V,E)$. Using the notation of Definition~\ref{indGraph}, let $X\in (CD)^\Z$ and $Y\in(DC)^\Z$ be the sets of bi-infinite paths on $\graph{G}_1$ and $\graph{G}_2$ and let $F:X\to Y$ be a bipartite code.
\begin{itemize}
\item If $x\in X$ is eventually geodesic, then $F(x)$ is eventually geodesic.
\item If $x\in X$, $y\in Y$ is a strictly proximal pair on $\graph{G}_1$, then $F(x),F(y)$ is a strictly proximal pair on $\graph{G}_2$.
\end{itemize}
\end{lemma}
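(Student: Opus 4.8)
Here is the approach I would take.

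The plan is to exploit that a bipartite code does not change the underlying path on $\graph{G}$: it only re-parses it, shifted by one edge. For $x\in X$ with $x[i]=c_id_i$ ($c_i\in C$, $d_i\in D$), let $\pi_1(x)\in E^\Z$ be the desubstituted path determined by $\pi_1(x)[2i]=c_i$ and $\pi_1(x)[2i+1]=d_i$. Since $c_id_ic_{i+1}d_{i+1}$ is a path on $\graph{G}$ whenever $(c_id_i)(c_{i+1}d_{i+1})$ is a path on $\graph{G}_1$, the sequence $\pi_1(x)$ is a bi-infinite path on $\graph{G}$, and $\pi_1\colon X\to P(\graph{G})$ is injective; define $\pi_2\colon Y\to P(\graph{G})$ analogously. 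Reading off the definition of a forward (resp.\ backward) bipartite code one checks that $\pi_2(F(x))=\sigma^{\varepsilon}(\pi_1(x))$ for all $x\in X$, where $\varepsilon=1$ (resp.\ $\varepsilon=-1$) and $\sigma$ is the shift on $E^\Z$. Thus it suffices to transfer the two properties between $\graph{G}_j$ and $\graph{G}$ via $\pi_j$ and to note that both properties survive a simultaneous shift by one coordinate.

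For geodesics, the key point is that $\graph{G}$ is bipartite with every edge running between $V_1$ and $V_2$, so any finite path of $\graph{G}$ with both endpoints in $V_1$, or both in $V_2$, has even length. Hence for a finite path $q$ on $\graph{G}_1$ (resp.\ $\graph{G}_2$), its desubstitution --- a path of length $2\abs{q}$ on $\graph{G}$ --- is a geodesic on $\graph{G}$ if and only if $q$ is a geodesic on $\graph{G}_j$: a strictly shorter path between the same endpoints in $\graph{G}$ would have even length and therefore re-parse into a strictly shorter path on $\graph{G}_j$, and conversely. Using also that any subpath of a geodesic is a geodesic, one obtains that $x\in X$ is eventually geodesic on $\graph{G}_1$ if and only if $\pi_1(x)$ is eventually geodesic on $\graph{G}$: if $x[i,i+n]$ is a geodesic for all $i\geq i_0$, then with threshold $2i_0$ every window $\pi_1(x)[j,j+m]$ lies inside some desubstituted geodesic $\pi_1(x)[2i,2i+2n+1]$ with $i\geq i_0$, and the converse direction is immediate. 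The same equivalence holds for $\graph{G}_2$ and $\pi_2$. Since being eventually geodesic on $\graph{G}$ is clearly preserved by $\sigma^{\pm1}$, chaining these equivalences with $\pi_2\circ F=\sigma^{\varepsilon}\circ\pi_1$ gives the first bullet.

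For the second bullet, let $x,y$ be a strictly proximal pair on $\graph{G}_1$ (so $x,y\in X$). Whenever $x[i,i+n]=y[i,i+n]$ with $i\in\N$, the sequences $\pi_1(x)$ and $\pi_1(y)$ agree on $[2i,2i+2n+1]$, so proximality of $x,y$ yields proximality of $\pi_1(x),\pi_1(y)$; and whenever $x[i]\neq y[i]$, the sequences $\pi_1(x)$ and $\pi_1(y)$ differ at position $2i$ or $2i+1$, so they differ at arbitrarily large positions. Hence $\pi_1(x),\pi_1(y)$ is a strictly proximal pair on $\graph{G}$; running the same computation in reverse --- now truncating a common window to an even starting position when necessary --- shows the equivalence holds in both directions, and likewise for $\graph{G}_2$ and $\pi_2$. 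A strictly proximal pair on $\graph{G}$ stays one after applying $\sigma^{\varepsilon}$ to both members (slightly longer common windows absorb the one-step shift, and a coordinate of disagreement merely moves by one), so $\pi_2(F(x))=\sigma^{\varepsilon}(\pi_1(x))$ and $\pi_2(F(y))=\sigma^{\varepsilon}(\pi_1(y))$ form a strictly proximal pair on $\graph{G}$, and therefore $F(x),F(y)$ form a strictly proximal pair on $\graph{G}_2$.

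The conceptual content is essentially exhausted by the desubstitution maps $\pi_1,\pi_2$ and the identity $\pi_2\circ F=\sigma^{\varepsilon}\circ\pi_1$; what remains is index bookkeeping --- the factor $2$ between $\graph{G}_j$-lengths and $\graph{G}$-lengths, the one-edge shift introduced by $F$, and the fact that the definitions of ``proximal pair'' and ``eventually geodesic'' quantify only over indices $i\in\N$, so one must check that the relevant windows and disagreeing coordinates can actually be found inside $\N$. The one step that is genuinely not formal is the geodesic equivalence, where bipartiteness of $\graph{G}$ is precisely what makes a shortcut between same-side vertices have even length and thus descend to a shortcut on $\graph{G}_j$.
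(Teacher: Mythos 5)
Your argument is correct, and the proximality half is essentially the paper's computation in different clothing (the paper just compares coordinates of $F(x)$ and $F(y)$ directly, which amounts to your observation that $\pi_2\circ F=\sigma^{\varepsilon}\circ\pi_1$). The geodesic half, however, genuinely takes a different route. The paper never passes through the ambient graph $\graph{G}$: arguing by contraposition, it takes a shortcut $w$ on $\graph{G}_2$ between the endpoints of $F(x)[i,i+n-1]$ and pads it to $c_i\,w\,d_{i+n}$, which is a path on $\graph{G}_1$ of length $\abs{w}+1<n+1$ between the endpoints of $x[i,i+n]$, so $x$ is not eventually geodesic. That one-line padding trick makes the parity considerations unnecessary, because the shortcut is already parsed into $DC$-blocks. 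Your detour through $\graph{G}$ buys a cleaner conceptual picture --- the bipartite code is literally a one-edge shift of the common desubstituted path, so both properties transfer for free once you prove they are insensitive to desubstitution and to $\sigma^{\pm1}$ --- but it costs you the bipartite parity lemma (a genuine extra ingredient, correctly identified and correctly used: a $\graph{G}$-shortcut between two $V_1$-vertices has even length and hence reparses into a $\graph{G}_1$-shortcut of half that length) plus the factor-of-two window bookkeeping, which you handle correctly via the fact that subpaths of geodesics are geodesics. Both proofs are sound; the paper's is shorter, yours isolates more clearly why the statement is really about the underlying unparsed path.
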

\begin{proof}
We may assume that $F$ is a forward bipartite code, the case of backward bipartite code being similar. For every $i\in\Z$ let $x[i]=c_i d_i$ and $y[i]=c'_i d'_i$ for $c_i,c_i'\in C$ and $d_i,d'_i\in D$.

For the first item, assume to the contrary that $F(x)$ is not eventually geodesic. Then there is an arbitrarily large $i\in\N$ and a number $n\in\Npos$ such that $F(x)[i,i+n-1]=(d_i c_{i+1})\cdots(d_{i+n-1}c_{i+n})$ is not geodesic on $\graph{G}_2$. Let $w\in(DC)^m$ be a path of length $m<n$ on $\graph{G}_2$ with the same initial and terminal vertex as $F(x)[i,i+n-1]$. Then $c_i w d_{i+n}\in(CD)^{m+1}$ is a path of length $m+1$ on $\graph{G}_1$ with the same initial and terminal vertex as $x[i,i+n]$. The length of $x[i,i+n]$ is $n+1>m+1$, so it is not geodesic. Because $i$ could be chosen arbitrarily large, we see that $x$ is not eventually geodesic.

We proceed to the second item and assume that $x,y$ are a strictly proximal pair on $\graph{G}_1$. To check the proximality condition, let $n\in\N$ be arbitrary and let $i\in\N$ be such that $x[i,i+n]=(c_i d_i)\cdots(c_{i+n}d_{i+n})=y[i,i+n]$. Then $F(x)[i,i+n-1]=(d_i c_{i+1})\cdots(d_{i+n-1}c_{i+n})=F(y)[i,i+n-1]$ and proximality follows. To check strict proximality, let $i>0$ be an arbitrary coordinate such that $x[i]\neq y[i]$, so either $c_i\neq c'_i$ or $d_i\neq d_i'$. If $c_i\neq c'_i$, then $F(x)[i-1]=d_{i-1}c_i\neq d'_{i-1}c_i'=F(y)[i-1]$, and if $d_i\neq d_i'$ then $F(x)[i]=d_i c_{i+1}\neq d_i' c_{i+1}'=F(y)[i]$.
\end{proof}

From this lemma it then follows that the property of the Fischer graph having an eventually geodesic path or a strictly proximal pair is a conjugacy invariant.

\begin{theorem}
Assume that $X$ and $Y$ are conjugate half-synchronized subshifts. Then $\graph{F}_X$ has an eventually geodesic strictly proximal pair if and only if $\graph{F}_Y$ has such a pair.
\end{theorem}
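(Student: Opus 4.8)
The plan is to reduce the statement to single ``elementary'' conjugacies via the Nasu decomposition (Theorem~\ref{decomp}) and then to transfer the property using Lemma~\ref{indFischer} together with Lemma~\ref{geoproxLemma}.

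First I would record that being half-synchronized is a conjugacy invariant: transitivity obviously is, and the existence of a half-synchronizing word is preserved under conjugacy by the canonical cover theory (this is already implicit in Theorem~\ref{canonical}, which speaks of the Fischer graph of any subshift conjugate to a half-synchronized one). Hence, writing a conjugacy $F:X\to Y$ as a composition $F=F_k\circ\cdots\circ F_1$ in which each $F_j:X_{j-1}\to X_j$ is either a coordinatewise bijective symbol map or a bipartite code (Theorem~\ref{decomp}), every intermediate subshift $X_j$ is again half-synchronized. It is therefore enough to prove the following: if $F:X'\to Y'$ is a coordinatewise bijective symbol map or a bipartite code between half-synchronized subshifts, and $\graph{F}_{X'}$ has a pair of infinite paths that is strictly proximal and consists of two eventually geodesic paths, then $\graph{F}_{Y'}$ has such a pair. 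Chaining this along $F_1,\dots,F_k$ gives one implication, and applying it along a decomposition of $F^{-1}$ gives the converse, so the ``if and only if'' follows.

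For a coordinatewise bijective symbol map the claim is immediate, since such a map induces a labeled graph isomorphism $\graph{K}_{X'}\to\graph{K}_{Y'}$ that restricts to an isomorphism $\graph{F}_{X'}\to\graph{F}_{Y'}$; being a geodesic, being eventually geodesic, and being a (strictly) proximal pair are properties of the underlying abstract graph and so are preserved by any isomorphism, indeed by any renaming of edges. For a bipartite code $F:X'\to Y'$, Lemma~\ref{indFischer} tells us that $\graph{F}_{X'}$ and $\graph{F}_{Y'}$ form an induced pair of graphs (after a renaming of edges, which by the previous sentence is harmless) and that $F$ lifts to a bipartite code $F':P(\graph{F}_{X'})\to P(\graph{F}_{Y'})$. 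Given an eventually geodesic strictly proximal pair $x,y$ on $\graph{F}_{X'}$, the first item of Lemma~\ref{geoproxLemma} (applied to each path) shows that $F'(x)$ and $F'(y)$ are both eventually geodesic, and the second item shows that $F'(x),F'(y)$ is a strictly proximal pair on $\graph{F}_{Y'}$; this is the required pair.

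Essentially everything here is an assembly of results already proved, so I do not expect a genuine obstacle; the only point meriting care is the remark that the intermediate subshifts in the Nasu decomposition remain half-synchronized, which is what licenses the use of Lemma~\ref{indFischer} and Lemma~\ref{geoproxLemma} at each step (the distinction between forward and backward bipartite codes is already absorbed into Lemma~\ref{geoproxLemma}).
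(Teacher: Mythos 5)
Your proposal is correct and follows essentially the same route as the paper: decompose the conjugacy via Theorem~\ref{decomp} into symbol maps and bipartite codes, handle symbol maps by graph isomorphism, and handle bipartite codes by combining Lemma~\ref{indFischer} with Lemma~\ref{geoproxLemma}. You are in fact slightly more careful than the paper's own proof, which does not explicitly note that the intermediate subshifts in the decomposition remain half-synchronized nor spell out the symmetric argument for the converse direction.
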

\begin{proof}
The proof is by structural induction. By Theorem~\ref{decomp} it suffices to consider the cases when $X$ and $Y$ are conjugate either via a symbol map or a bipartite code. If $F:X\to Y$ is a bijective symbol map, then clearly $\graph{F}_X$ and $\graph{F}_Y$ are isomorphic graphs. If $F$ is a bipartite code, then by Lemma~\ref{indFischer} the graphs $\graph{F}_X$ and $\graph{F}_Y$ are an induced pair and there is a bipartite code $F':P(\graph{F}_X)\to P(\graph{F}_Y)$. If there is an eventually geodesic strictly proximal pair on $\graph{F}_X$, then by Lemma~\ref{geoproxLemma} there is an eventually geodesic strictly proximal pair also on $\graph{F}_Y$.
\end{proof}

First we prove a sufficient criterion for when a non-sofic half-synchronized subshift is ``almost'' direct prime in the sense that it cannot be represented as a product of two \emph{infinite} subshifts.

\begin{theorem}\label{crit}
If the Fischer graph of a non-sofic half-synchronized subshift $X$ does not contain an eventually geodesic strictly proximal pair and $X$ is conjugate to $Y\times Z$, then either $Y$ or $Z$ is finite.
\end{theorem}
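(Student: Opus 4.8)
The plan is to argue the contrapositive: assuming $X$ is conjugate to $Y\times Z$ with both $Y$ and $Z$ infinite, I would exhibit an eventually geodesic strictly proximal pair in $\graph{F}_X$. Since the class of half-synchronized subshifts and the class of non-sofic subshifts are each closed under conjugacy, $Y\times Z$ is a non-sofic half-synchronized subshift; by Lemma~\ref{halfsynchprod} so are $Y$ and $Z$, and by Lemma~\ref{prodFischer} we have $\graph{F}_{Y\times Z}=\graph{F}_Y\times\graph{F}_Z$. A non-sofic half-synchronized subshift has an infinite Fischer graph, so this product graph is infinite and hence at least one factor is infinite; after swapping the roles of $Y$ and $Z$ if necessary (using $Y\times Z\cong Z\times Y$) I may assume $\graph{F}_Y$ is infinite, while $Z$ is still infinite. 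Having produced a suitable pair in $\graph{F}_{Y\times Z}$, the conjugacy-invariance theorem proved just above then transports it to $\graph{F}_X$, contradicting the hypothesis.

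\emph{Step 1 (an eventually geodesic path in the infinite factor).} I would first show that the infinite, strongly connected graph $\graph{F}_Y$ contains a bi-infinite eventually geodesic path $\rho$. Fix a vertex $u_0$. Since $Y$ is over a finite alphabet, every vertex of $\graph{F}_Y$ has out-degree at most $\abs{A}$, so each ball $B_n=\{v:\dist(u_0,v)\le n\}$ is finite; as $\graph{F}_Y$ is strongly connected and infinite, $\bigcup_n B_n$ is infinite, so geodesic paths from $u_0$ of every length exist. The finite geodesic paths starting at $u_0$, ordered by the prefix relation, then form an infinite finitely branching tree, so by König's lemma there is a right-infinite path $r$ from $u_0$ all of whose prefixes — hence all of whose finite subpaths — are geodesics. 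Since $\graph{F}_Y$ is strongly connected, $u_0$ has positive in-degree, so following incoming edges backwards gives a left-infinite path $\ell$ ending at $u_0$, and $\rho:=\ell r$ (with $r$ on coordinates $\ge 0$) is a bi-infinite path in $\graph{F}_Y$ that is eventually geodesic with $i_0=0$.

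\emph{Step 2 (a strictly proximal pair in the other factor).} Because $Z$ is infinite, $\graph{F}_Z$ cannot be a single cycle, so it has a vertex $v$ of out-degree at least $2$; combining two distinct edges out of $v$ with return paths supplied by strong connectivity and passing to suitable powers, I obtain two distinct cycles $\gamma\ne\gamma'$ based at $v$ of a common length $L$ that differ in their first edge. Fixing a left-infinite path $\lambda$ ending at $v$ and a strictly increasing sequence $n_1<n_2<\cdots$, I would set, on the non-negative coordinates, $\zeta=\gamma^\infty$ and $\zeta'=\gamma^{n_1}\gamma'\gamma^{n_2}\gamma'\gamma^{n_3}\gamma'\cdots$, prefixing both by $\lambda$ on the negative coordinates; both are legitimate elements of $P(\graph{F}_Z)$. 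They coincide on the stretches of consecutive $\gamma$-blocks, which have lengths $n_kL\to\infty$, so the pair is proximal, and they differ at the first coordinate of each $\gamma'$-block of $\zeta'$, whose positions tend to infinity, so the pair is strictly proximal.

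\emph{Step 3 (assembling the pair).} Finally I would take $x=(\rho,\zeta)$ and $y=(\rho,\zeta')$ in $\graph{F}_Y\times\graph{F}_Z=\graph{F}_{Y\times Z}$; these are bi-infinite paths with identical $\graph{F}_Y$-component. For $i\ge 0$ and $n\in\N$, the projection of $x[i,i+n]$ to $\graph{F}_Y$ is $\rho[i,i+n]$, a geodesic of length $n+1$, so every path joining the endpoints of $x[i,i+n]$ in the product projects to a path of equal length in $\graph{F}_Y$ and thus has length $\ge n+1$; hence $x[i,i+n]$ is a shortest path, and $x$ (and likewise $y$) is eventually geodesic. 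Proximality and strict proximality of $x,y$ are immediate from those of $\zeta,\zeta'$ since the $\graph{F}_Y$-components agree. This produces the required pair in $\graph{F}_{Y\times Z}$, hence in $\graph{F}_X$. I expect the main obstacle to be Step 1: the real content is the graph-theoretic fact that an infinite strongly connected graph of bounded out-degree contains a bi-infinite eventually geodesic path (finite out-degree plus unbounded eccentricity from a fixed vertex, fed into König's lemma). The product-distance observation in Step 3 and the interleaving construction in Step 2 are then routine.
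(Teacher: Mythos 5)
Your proposal is correct and follows essentially the same route as the paper: reduce to $\graph{F}_{Y\times Z}=\graph{F}_Y\times\graph{F}_Z$ via Lemmas~\ref{halfsynchprod} and~\ref{prodFischer}, pair a geodesic ray in the infinite factor with two paths in the other factor built from two equal-length cycles at a branching vertex, and transport the resulting eventually geodesic strictly proximal pair to $\graph{F}_X$ by conjugacy invariance. You in fact supply details the paper leaves implicit, notably the K\"onig's-lemma argument for the existence of the geodesic ray and the verification that geodesics in a factor lift to geodesics in the product.
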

\begin{proof}
Assume to the contrary that both $Y$ and $Z$ are infinite. By Lemma~\ref{halfsynchprod} the subshifts $Y$ and $Z$ are also half-synchronized. Because $X$ is not sofic, we may assume without loss of generality that $Y$ is not sofic and thus $\graph{F}_Y$ is infinite. Let $y$ be a path on $\graph{F}_Y$ such that $y[0,\infty]$ is geodesic. Because $Z$ is infinite and $\graph{F}_Z$ is strongly connected, there is some vertex $v$ with two different outgoing edges $e_1,e_2$. Let $w_1$ and $w_2$ be cycles on $\graph{F}_Z$ starting with $e_1$ and $e_2$ respectively. Let $u_1=w_1w_2$, $u_2=w_2w_1$: these are paths of equal length. Let $z_1,z_2$ be paths on $\graph{F}_Z$ such that $z_1[-\infty,-1],z_2[-\infty,-1]$ terminate at the vertex $v$ and $z_1[0,\infty]=u_1^\infty$, $z_2[0,\infty]=\prod_{i=1}^\infty (u_1^i u_2)$. The Fischer graph of $Y\times Z$ is equal to $\graph{F}_Y\times\graph{F}_Z$ by Lemma~\ref{prodFischer} and it contains paths $(y,z_1)$ and $(y,z_2)$. These are eventually geodesic and strictly proximal. By the previous theorem the graph $\graph{F}_X$ also has an eventually geodesic strictly proximal pair, a contradiction.
\end{proof}

After applying the previous theorem, to prove direct primeness it is sufficient to rule out the possibility of nontrivial finite direct factors. The existence of fixed points gives one way to do this, as seen in the next corollary.

\begin{corollary}\label{critCor}
If the Fischer graph of a non-sofic half-synchronized subshift $X$ does not contain an eventually geodesic strictly proximal pair and if $X$ contains a fixed point, then $X$ is topologically direct prime.
\end{corollary}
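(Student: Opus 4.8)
The plan is to apply Theorem~\ref{crit} to reduce to the situation of a \emph{finite} direct factor, and then to use the fixed point to upgrade ``finite'' to ``trivial''. So suppose $X$ is conjugate to $Y\times Z$; by the symmetry of the product and by Theorem~\ref{crit} we may assume that $Z$ is finite, i.e.\ that $Z$ consists of only finitely many configurations, and it suffices to show that $Z$ is trivial.

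First I would note that $Z$ is transitive: $X$ is half-synchronized and hence transitive, so $Y\times Z$ is transitive, and transitivity of a product forces transitivity of each factor (exactly as in the first line of the proof of Lemma~\ref{halfsynchprod}). A finite subshift is a finite disjoint union of periodic orbits, and a transitive one is a single periodic orbit $Z=\{z,\sigma(z),\dots,\sigma^{p-1}(z)\}$ for some configuration $z$ of minimal period $p$.

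Next I would bring in the fixed point. Pick $x_0\in X$ with $\sigma(x_0)=x_0$ and a conjugacy $G\colon X\to Y\times Z$; since $G\circ\sigma=\sigma\circ G$, the image $G(x_0)=(y_0,z_0)$ satisfies $(\sigma(y_0),\sigma(z_0))=(y_0,z_0)$, so $z_0$ is a fixed point of $Z$. But then the single periodic orbit $Z$ contains a point of period $1$, which is possible only if $p=1$, i.e.\ $Z=\{z_0\}$ is trivial. Hence in every decomposition of $X$ as $Y\times Z$ one of the factors is trivial, so $X$ is topologically direct prime.

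I do not anticipate a genuine difficulty here: once Theorem~\ref{crit} is available the argument is short bookkeeping, and the only two points that need care are that a fixed point of $X$ descends to a fixed point in each coordinate of $Y\times Z$ (immediate from the coordinatewise action of $\sigma$ on the product) and that a finite transitive subshift carrying a fixed point can be nothing but a one-point subshift.
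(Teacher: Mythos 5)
Your argument is correct and follows essentially the same route as the paper: apply Theorem~\ref{crit} to reduce to a finite direct factor, then use transitivity of the product together with the fixed point to show that this finite factor is trivial. The only cosmetic difference is in the last step, where the paper argues directly with the language (showing $L(Y)=a^*$ via irreducibility and the bounded period), while you invoke the equally standard fact that a finite transitive subshift is a single periodic orbit, which collapses to a point once it contains a fixed point.
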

\begin{proof}
Assume to the contrary that $X$ is conjugate to a product $Y\times Z$ of non-trivial subshifts. By the previous theorem we may assume without loss of generality that $Y$ is finite with every configuration having period at most $p\geq 1$. The subshift $Y$ is also transitive and has a fixed point $a^\Z$ because $Y\times Z$ is transitive and has a fixed point. For any $u\in L(Y)$ there is $w\in L(Y)$ such that $uwa^p\in L(Y)$. But any configuration containing $uwa^p$ has period at most $p$, so $u\in a^*$. Therefore $L(Y)=a^*$ and $\abs{Y}=1$, contradicting its non-triviality.
\end{proof}

In the previous corollary it would be possible to replace the assumption of $X$ having a fixed point by the assumption of $X$ being mixing (by using the fact that a finite mixing subshift can contain only one configuration). We will not make use of this alternative criterion.

\section{Direct Primeness of Well Known Classes of Subshifts}

In this section we prove that several natural classes of subshifts are direct prime. First we consider the so-called $n$-Dyck shifts. It was shown in~\cite{Mey17} that the $n$-Dyck shift is topologically direct prime at least when $n$ is a prime number. We generalize this for all $n>1$. We first recall the basic definition from~\cite{Mey17}.

For a natural number $n>1$ we define the symbol sets $\bra{n,\ell}=\{\alpha_1,\dots,\alpha_n\}$, $\bra{n,\arr}=\{\beta_1,\dots,\beta_n\}$ (the left and right brackets) and $\bra{n}=\bra{n,\ell}\cup\bra{n,\arr}$. Let $M$ be the monoid generated by $\bra{n}\cup\{0\}$, with identity element $1$ and zero element $0$, subject to the relations $\alpha_i\beta_i=1$ and $\alpha_i\beta_j=0$ for $i,j\in\{1,\dots,n\}$, $i\neq j$.

The $n$-Dyck shift is defined by
\[D_n=\{x\in\bra{n}^\Z\mid x[i]\cdot x[i+1]\cdot \ldots \cdot x[i+k]\neq 0\mbox{ for all }i\in\Z, k\in\N\}:\]
intuitively this means that configurations of $D_n$ do not contain mismatched brackets. As an example we consider the simplest Dyck shift $D_2$, in which we replace the symbols $\alpha_1,\alpha_2,\beta_1,\beta_2$ by the brackets $(,[,)$ and $]$ respectively. For example $\cdots((()))\cdots\in D_2$, because any subword simplifies to an element of either $(^*$ or $)^*$. As another example, no element of $D_2$ can contain $((^n)^n]$ for any $n\in\N$, because $((^n)^n]=0$ as an element of $M$. This is seen by using the relation $()=1$ $n$ times and then the relation $(]=0$.

Let $w_i$, $i\in\N$ be an enumeration of all the words in $L(D_n)$ that are equal to $1$ as elements of the monoid $M$ (i.e. all the brackets are matched in $w_i$). Every word of $L(D_n)$ is a subword of some $w_i$. Any $w\in L(D_n)$ is a half-synchronizing word of $D_n$, since it is easily seen that $L(y^{-})=L(w)$ for $y^{-}=\cdots w_2 w_1 w_0 w$.

Note that the Fischer graph of $D_n$ consists of all the vertices and edges along paths starting from $\foll(\epsilon)$ in $\graph{K}_{D_n}$. It turns out that $\graph{F}_{D_n}=(V,E)$, where $V=\{\foll(w)\mid w\in\bra{n,\ell}^*\}$ and for $i\in\{1,\dots,n\}$
\begin{itemize}
\item from the vertex $\foll(\epsilon)$: there are self-loops $\beta_i$ and edges labeled by $\alpha_i$ to the vertex $\foll(\alpha_i)$
\item from any vertex of the form $\foll(w\alpha_i)$: there are edges labeled by $\alpha_j$ to the vertex $\foll(w\alpha_i\alpha_j)$ and an edge labeled by $\beta_i$ to the vertex $\foll(w)$.
\end{itemize}
We show a part of the Fischer graph $\graph{F}_{D_2}$ in Figure~\ref{Dyck}.

\begin{figure}
\centering
\begin{tikzpicture}[auto]
\node(e) at (0,0) [shape=circle,draw,minimum size=6mm] {$\epsilon$};

\node(a) at (4,1) [shape=circle,draw,minimum size=6mm] {$($};
\node(b) at (4,-1) [shape=circle,draw,minimum size=6mm] {$[$};

\node(aa) at (8,3) [shape=circle,draw,minimum size=6mm] {$(($};
\node(ab) at (8,1) [shape=circle,draw,minimum size=6mm] {$([$};
\node(ba) at (8,-1) [shape=circle,draw,minimum size=6mm] {$[($};
\node(bb) at (8,-3) [shape=circle,draw,minimum size=6mm] {$[[$};

\draw[-{triangle 45}] (e) to [in=170, out=90, loop above] node {$)$} ();
\draw[-{triangle 45}] (e) to [in=270, out=190, loop below] node {$]$} ();

\draw[-{triangle 45}] (e) to [out=30, in=180] node {$($} (a);
\draw[-{triangle 45}] (a) to node {$)$} (e);
\draw[-{triangle 45}] (e) to [out=330, in=180, below] node {$[$} (b);
\draw[-{triangle 45}] (b) to [above] node {$]$} (e);

\draw[-{triangle 45}] (a) to [out=70, in=180] node {$($} (aa);
\draw[-{triangle 45}] (aa) to [above] node {$)$} (a);
\draw[-{triangle 45}] (a) to [out=20, in=160]  node {$[$} (ab);
\draw[-{triangle 45}] (ab) to node {$]$} (a);
\draw[-{triangle 45}] (b) to [out=340, in=200, below]  node {$($} (ba);
\draw[-{triangle 45}] (ba) to [above] node {$)$} (b);
\draw[-{triangle 45}] (b) to [out=290, in=180, below] node {$[$} (bb);
\draw[-{triangle 45}] (bb) to [below] node {$]$} (b);

\node(x) at (9,3) {$\cdots$};
\node(y) at (9,1) {$\cdots$};
\node(x) at (9,-1) {$\cdots$};
\node(w) at (9,-3) {$\cdots$};
\end{tikzpicture}
\caption{The Fischer graph $\graph{F}_{D_2}$.}
\label{Dyck}
\end{figure}

\begin{theorem}\label{primeDyck}
The Dyck shift $D_n$ is topologically direct prime for every $n>1$.
\end{theorem}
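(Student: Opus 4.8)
The plan is to apply Corollary~\ref{critCor}: since $D_n$ is a non-sofic half-synchronized subshift containing a fixed point (for instance $\beta_1^\Z$, or equivalently any configuration that is constant on a single bracket symbol which reduces to $1$ or stays nonzero — concretely $\beta_i^{\Z}$ works since a product of equal right brackets never equals $0$ in $M$), it suffices to show that $\graph{F}_{D_n}$ contains no eventually geodesic strictly proximal pair of bi-infinite paths. Non-soficity of $D_n$ for $n>1$ is standard (the follower sets $\foll(\alpha_{i_1}\cdots\alpha_{i_k})$ are pairwise distinct, so $\graph{F}_{D_n}$ is infinite), and half-synchronization is noted in the excerpt. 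So the whole content is the combinatorial claim about paths in the explicitly described tree-like graph.

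The key observation is the structure of $\graph{F}_{D_n}$ as given: its vertex set is identified with $\bra{n,\ell}^*$ (words in left brackets), $\foll(\epsilon)$ is the root, and the edges are exactly: from $\foll(w)$, an $\alpha_j$-edge ``down'' to $\foll(w\alpha_j)$ for each $j$, and from $\foll(w\alpha_i)$ a single $\beta_i$-edge ``up'' to $\foll(w)$, plus $\beta_i$-self-loops at the root. Thus the graph is a rooted $n$-ary tree with each edge doubled (one down-arrow labeled by a left bracket, one up-arrow labeled by the matching right bracket) together with $n$ self-loops at the root. I would first prove that the distance $\dist(\foll(u),\foll(v))$ between two vertices equals the tree distance $|u| + |v| - 2|u\wedge v|$ where $u\wedge v$ is the longest common prefix, and in particular that geodesics are forced: a geodesic finite path from $\foll(u)$ to $\foll(v)$ must first go straight up from $u$ to $u\wedge v$ and then straight down to $v$, never using a self-loop and never backtracking. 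Consequently a right-infinite geodesic path is eventually one of two shapes: either it eventually goes ``strictly down forever'' (reading an infinite word of left brackets, visiting a strictly increasing sequence of vertices), or it eventually goes ``straight up'' to the root and then can do essentially nothing geodesic afterwards except leave again — more carefully, any right-infinite geodesic visits each vertex at most once, so since from any non-root vertex there is a unique up-edge, after finitely many steps it must be descending monotonically into the tree (its vertex sequence has strictly increasing depth from some point on).

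Given that, I would argue: suppose $(x,y)$ is an eventually geodesic strictly proximal pair on $\graph{F}_{D_n}$. Strict proximality means the two paths agree on arbitrarily long windows at arbitrarily late positions yet differ infinitely often; in particular infinitely often they are at the same vertex. But once both $x$ and $y$ are in their eventually-geodesic, eventually-descending regime, being at a common vertex $\foll(w)$ at some position $i$ forces, by the geodesic descent property, that both continue by reading left brackets descending into the subtree below $w$, and the first place they differ after $i$ is a position where one reads $\alpha_j$ and the other $\alpha_k$ with $j\neq k$ — after which their vertex-paths move into disjoint subtrees and, being geodesic (monotone descending, no going back up), can never meet again. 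Hence after the last disagreement preceding a sufficiently late common-vertex position they would have to agree forever, contradicting that disagreements occur at arbitrarily large coordinates. (One must also handle the remaining ``up toward the root / self-loop'' transient, which only occurs for finitely many coordinates by eventual geodesicness, so it does not interfere.) This contradiction shows no such pair exists, and Corollary~\ref{critCor} finishes the proof.

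The main obstacle I anticipate is making the ``two descending geodesic paths that share a vertex cannot subsequently diverge and reconverge'' argument fully rigorous while correctly accommodating the window-shifting inherent in Definition~\ref{qprox} — the proximality windows $x[i,i+n]=y[i,i+n]$ are aligned in the path's index, so I need that a common vertex occurs at a common index, which follows since agreement on a nonempty window of edges forces agreement of the endpoint vertices at the matching indices. A secondary, more bookkeeping-heavy point is proving the geodesic-descent structural lemma cleanly: I must verify that no shorter path can exploit a self-loop or a clever reroute, which is where the exact edge list of $\graph{F}_{D_n}$ (each tree edge present in exactly one direction with a specified label, self-loops only at the root) is used crucially. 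I expect the formal writeup to spend most of its length on that structural lemma and then dispatch the strict-proximality contradiction in a short paragraph.
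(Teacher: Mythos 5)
Your proposal is correct and follows essentially the same route as the paper: it verifies the hypotheses of Corollary~\ref{critCor} by showing that an eventually geodesic path in the tree-like graph $\graph{F}_{D_n}$ must eventually read only left brackets (descend monotonically into the tree), after which a strictly proximal pair would have to diverge permanently upon its first disagreement, contradicting proximality. The paper dispenses with your explicit tree-distance/``up then down'' lemma by arguing directly that a right-bracket edge at a minimal position $j>i$ would force $\term{x[j]}=\init{x[j-1]}$ and violate geodesicness, but the content is the same (and it uses the fixed point $\alpha_1^\Z$ where you use $\beta_1^\Z$; both work).
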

\begin{proof}
We claim that $\graph{F}_{D_n}$ does not contain an eventually geodesic strictly proximal pair, so assume to the contrary that $x,y$ is such a pair. Since these paths are eventually geodesic, we may assume up to shifting the paths that $x[0,\infty]$ and $y[0,\infty]$ are geodesic. In particular there are infinitely many $\init{x[i]}$ for $i\in\N$, so it follows that $\lbl_{D_n}(x[i])\in\bra{n,\ell}$ for some $i\in\N$. Then $\lbl_{D_n}(x[j])\in\bra{n,\ell}$ for all $j>i$, because if $j>i$ were a minimal number such that $\lbl_{D_n}(x[j])\in\bra{n,\arr}$, it would follow that $\term{x[j]}=\init{x[j-1]}$, contradicting the geodesic assumption. We can argue similarly for the path $y$, so up to shifting the paths we may assume that $\lbl_{D_n}(x[i]),\lbl_{D_n}(y[i])\in\bra{n,\ell}$ for all $i\in\N$. Since $x,y$ is a strictly proximal pair, we additionally have $x[i]=y[i]$ (in particular $\term{x[i]}=\term{y[i]}$) and $x[i+1]\neq y[i+1]$ for some $i\in\N$, so after coordinate $i$ the paths $x$ and $y$ start following different branches in $D_n$ and $x[j]\neq y[j]$ for all $j>i$, a contradiction with proximality.

Since the Fischer graph of $D_n$ does not contain an eventually geodesic strictly proximal pair and $D_n$ contains a fixed point $\alpha_1^\Z$, it follows from Corollary~\ref{critCor} that $D_n$ is topologically direct prime.
\end{proof}

Next we will consider the so-called $S$-gap shifts. It is a previous result that every non-sofic $S$-gap shift is topologically direct prime. (In fact such $S$-gap shifts do not have RCA without almost equicontinuous directions~\cite{Kop20glider}: this is a stronger statement as seen in~\cite{Kop20}.) We present a new argument of this fact in the Fischer graph framework.

For nonempty $S\subseteq\N$, the $S$-gap shift $X_S\subseteq\digs_2^\Z$ is the subshift whose language $L(X_S)$ consists of all the subwords of elements of $\{01^n\mid n\in S\}^*$. Every $X_S$ is synchronized, because $0$ is a synchronizing word. By Theorem~3.4 of~\cite{DJ12} an $S$-gap shift is sofic if and only if $S$ is eventually periodic. In particular, for non-sofic $X_S$ the set $S$ is infinite, $L(X_S)$ contains $01^n$ for arbitrarily large $n\in\N$, and therefore $1^\Z\in X_S$

Note that the Fischer graph of $X_S$ consists of all the vertices and edges along paths starting from $\foll(0)$ in $\graph{K}_{X_S}$. It turns out that the Fischer graph $\graph{F}_{X_S}$ of a non-sofic $X_S$ has the vertex set $\{\foll(01^n)\mid n\in\N\}$, an edge labeled by $1$ from $\foll(01^n)$ to $\foll(01^{n+1})$, and for $n\in S$ an edge labeled by $0$ from $\foll(01^n)$ to $\foll(0)$.  In particular, from the fact that $X_S$ is not eventually periodic it follows that the follower sets of all $01^n$ are different.  We show a part of $\graph{F}_{X_S}$ in Figure~\ref{Sgap} in the case $S=\{2^i\mid i\in\N\}$.

\begin{figure}
\centering
\begin{tikzpicture}[auto]
\node(o) at (0,0) [shape=circle,draw,minimum size=8mm] {$0$};
\node(i) at (2,0) [shape=circle,draw,minimum size=8mm] {$01$};
\node(ii) at (4,0) [shape=circle,draw,minimum size=8mm] {$011$};
\node(iii) at (6,0) [shape=circle,draw,minimum size=8mm] {$01^3$};
\node(iv) at (8,0) [shape=circle,draw,minimum size=8mm] {$01^4$};
\node(v) at (10,0) [shape=circle,draw,minimum size=8mm] {$01^5$};

\draw[-{triangle 45}] (o) to [below] node {$1$} (i);
\draw[-{triangle 45}] (i) to [below] node {$1$} (ii);
\draw[-{triangle 45}] (ii) to [below] node {$1$} (iii);
\draw[-{triangle 45}] (iii) to [below] node {$1$} (iv);
\draw[-{triangle 45}] (iv) to [below] node {$1$} (v);

\draw[-{triangle 45}] (i) to [out=220, in=320, below] node {$0$} (o);
\draw[-{triangle 45}] (ii) to [out=150, in=30, above right] node {$0$} (o);
\draw[-{triangle 45}] (iv) to [out=140, in=40, above] node {$0$} (o);

\node(x) at (11,0) {$\cdots$};
\end{tikzpicture}
\caption{The Fischer graph $\graph{F}_{X_S}$.}
\label{Sgap}
\end{figure}
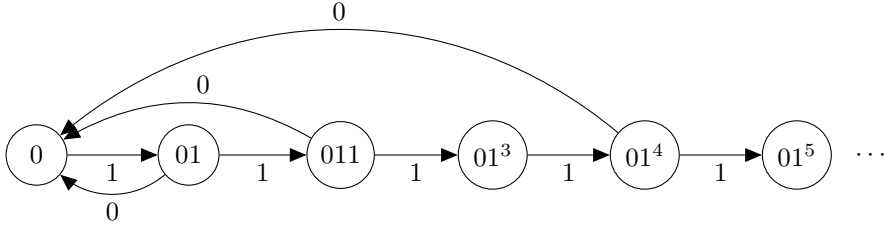

\begin{theorem}\label{primeS}
Every non-sofic $S$-gap shift $X_S$ is topologically direct prime.
\end{theorem}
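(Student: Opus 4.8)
The plan is to invoke Corollary~\ref{critCor}. As recalled above, a non-sofic $X_S$ is synchronized (hence half-synchronized) and contains the fixed point $1^\Z$, so everything reduces to showing that $\graph{F}_{X_S}$ contains no eventually geodesic strictly proximal pair. Write $v_n=\foll(01^n)$ for the vertices of $\graph{F}_{X_S}$, so that the edges are a $1$-labeled edge $v_n\to v_{n+1}$ for every $n\in\N$ and, for every $n\in S$, a $0$-labeled edge $v_n\to v_0$. The only fact about distances that the argument needs is the trivial bound $\dist(v_a,v_b)\le b-a$ whenever $a\le b$, obtained by following $1$-edges.

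The core of the argument is a structural dichotomy for geodesic right-infinite paths $p$ on $\graph{F}_{X_S}$: either ($\ast$) $p$ uses only finitely many $0$-edges, in which case it is eventually of the form $v_a\to v_{a+1}\to v_{a+2}\to\cdots$ and its source vertices tend to infinity; or ($\ast\ast$) $p$ uses infinitely many $0$-edges, in which case it is eventually periodic. To prove ($\ast\ast$), enumerate the positions of the $0$-edges of $p$ as $k_1<k_2<\cdots$ and set $v_{n_m}=\init{p[k_m]}$, so each $n_m\in S$. Between the $m$-th and the $(m+1)$-th $0$-edge, $p$ runs $v_0\to v_1\to\cdots\to v_{n_{m+1}}$ along $1$-edges, so the subpath of $p$ from $v_{n_m}$ to $v_{n_{m+1}}$ has length $n_{m+1}+1$; since this subpath is geodesic, $n_{m+1}>n_m$ would force $n_{m+1}+1=\dist(v_{n_m},v_{n_{m+1}})\le n_{m+1}-n_m$, which is absurd. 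Hence $n_{m+1}\le n_m$ for all $m$, the sequence $(n_m)$ of non-negative integers is eventually constant, say equal to $n\in S$, and then $p$ eventually cycles through $v_0,v_1,\dots,v_n$ with period $n+1$.

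Now suppose $x,y$ is an eventually geodesic strictly proximal pair; choosing $i_0$ past the geodesic thresholds of both, the rays $x[i_0,\infty]$ and $y[i_0,\infty]$ are geodesic, so the dichotomy applies to each. If both are of type ($\ast$), then at large coordinates $j$ the edges $x[j]$ and $y[j]$ are $1$-edges out of vertices $v_{a+j}$ and $v_{b+j}$ for fixed $a,b$; agreement at one such $j$ then forces $a=b$ and hence agreement at all large $j$, contradicting strict proximality, whereas $a\neq b$ forces disagreement at all large $j$, contradicting proximality. If exactly one of them, say $x$, is of type ($\ast$), then the source vertices of $x$ escape to infinity while those of $y$ stay in a bounded set, so $x[j]\neq y[j]$ for all large $j$, again contradicting proximality. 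If both are of type ($\ast\ast$), then both are eventually periodic; taking a common period $q$ and a coordinate $N$ beyond which both are $q$-periodic, proximality yields an agreement window of length $\ge q$ among the coordinates $\ge N$, which forces $x$ and $y$ to agree on all coordinates $\ge N$, contradicting strict proximality. Every case is impossible, so $\graph{F}_{X_S}$ has no eventually geodesic strictly proximal pair and Corollary~\ref{critCor} yields that $X_S$ is topologically direct prime.

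The step I expect to be the main obstacle is the structural dichotomy of the second paragraph — in particular the observation that the monotonicity $n_{m+1}\le n_m$ of the ``depths'' at which the $0$-edges occur pins down every non-escaping geodesic ray to an eventually periodic pattern. Once that is in place, the case analysis is routine bookkeeping with the definitions of proximality and periodicity.
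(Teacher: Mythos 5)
Your proof is correct and follows the paper's skeleton --- reduce to Corollary~\ref{critCor} via the fixed point $1^\Z$ and show that $\graph{F}_{X_S}$ has no eventually geodesic strictly proximal pair --- but your verification of that last claim takes a genuinely different and longer route than the paper's. The paper's proof rests on the single observation that a geodesic right-infinite ray in $\graph{F}_{X_S}$ uses only $1$-labelled edges; then, at a coordinate $i$ where the pair agrees while $x[i+1]\neq y[i+1]$, the two edges $x[i+1]$ and $y[i+1]$ leave the same vertex, which has only one outgoing $1$-edge, so one of them must be a $0$-edge --- an immediate contradiction. Your dichotomy instead entertains geodesic rays containing $0$-edges, and your case $(\ast\ast)$ is in fact vacuous: a geodesic ray cannot contain two $0$-edges, because the subpath from the source $v_{n_m}$ of one $0$-edge to the target $v_0$ of the next has length $k_{m+1}-k_m+1\geq 2$, whereas the single edge $v_{n_m}\to v_0$ already gives a path of length $1$; and it cannot contain exactly one $0$-edge either, since after a $0$-edge out of $v_m$ the ray climbs $v_0\to v_1\to\cdots$ forever, and once it passes $v_{m+1}$ the subpath from $v_m$ through $v_0$ to $v_{m+1}$ has length $m+2$ while the direct $1$-edge has length $1$. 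So the eventually periodic geodesic rays you describe do not exist. This does not invalidate your argument --- the vacuous cases are handled consistently, and your treatment of the one genuine case (both rays of type $(\ast)$, compared via their offsets $a$ and $b$) is sound --- but noticing the vacuity collapses your three-case analysis to the paper's two-line branching argument. What your offset comparison buys in exchange is that it avoids having to locate an exact transition coordinate from agreement to disagreement, arguing instead from the eventual rigidity of all-$1$ rays.
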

\begin{proof}
We claim that $\graph{F}_{X_S}$ does not contain an eventually geodesic strictly proximal pair, so assume to the contrary that $x,y$ is such a pair. Since these paths are eventually geodesic, we may assume up to shifting the paths that $x[0,\infty]$ and $y[0,\infty]$ are geodesic. Then necessarily $\lbl_{X_S}(x[i])=\lbl_{X_S}(y[i])=1$ for $i\in\N$. On the other hand, since $x,y$ is a strictly proximal pair, we have $x[i]=y[i]$ and $x[i+1]\neq y[i+1]$ for some $i\in\N$. From $x[i]=y[i]$ it follows that $\init{x[i+1]}=\init{y[i+1]}$. This combined with $x[i+1]\neq y[i+1]$ implies that either $\lbl_{X_S}(x[i+1])=0$ or $\lbl_{X_S}(y[i+1])=0$, a contradiction.

Since the Fischer graph of $X_S$ does not contain an eventually geodesic strictly proximal pair and $X_S$ contains a fixed point $1^\Z$, it follows from Corollary~\ref{critCor} that $X_S$ is topologically direct prime.
\end{proof}

We conclude this section by considering the so-called beta-shifts. It is a previous result that every non-sofic beta-shift is topologically direct prime~\cite{Kop20}, but we can give a new proof of this fact in the Fischer graph framework. As in~\cite{Bla89} and Section~7.2.2. of~\cite{Lot02}, any real number $\beta>1$ can be associated in a certain way with a sequence $x_\beta\in \digs_n^\N$ (for some $n>1$) such that $x_\beta[0]>0$, $x_\beta\neq 10^\infty$ and every suffix of $x_\beta$ is lexicographically smaller than $x_\beta$ (with the lexicographical ordering $\leq$ induced from the usual ordering of $\digs_n$). Then the beta-shift (in base~$\beta$) is
\[X_\beta=\{x\in\digs_n^\Z\mid x[i,\infty]\leq x_\beta\mbox{ for every }i\in\Z\}.\]
This is non-sofic precisely when $x_\beta$ is not eventually periodic. We now consider only such cases.

It is stated in~\cite{FF92} that $X_\beta$ is half-synchronized and that every word in $L(X_\beta)$ is half-synchronized. The graph $\graph{F}_{X_\beta}$ then consists of all the vertices and edges along paths starting from $\foll(\epsilon)$ in $\graph{K}_{X_\beta}$. As mentioned in~\cite{DS19}, it turns out that the Fischer graph $\graph{F}_{X_\beta}$ of a non-sofic $X_\beta$ has the vertex set $\{\foll(x_\beta[0,n])\mid n\geq -1\}$, an edge labeled by $x_\beta[n+1]$ from $\foll(x_\beta[0,n])$ to $\foll(x_\beta[0,n+1])$, and for $0\leq i<x_\beta[n+1]$ an edge labeled by $i$ from $\foll(x_\beta[0,n])$ to $\foll(\epsilon)$.  In particular, from the fact that $x_\beta$ is not eventually periodic it follows that the follower sets of all $x_\beta[0,n]$ are different. We show a part of $\graph{F}_{X_\beta}$ in Figure~\ref{Bshift} in the case $x_\beta=22102\cdots$.

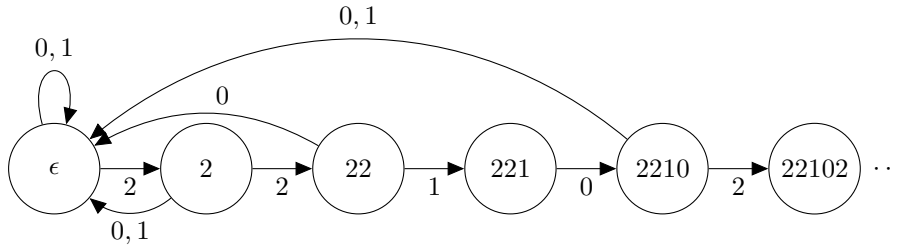
\begin{figure}
\centering
\begin{tikzpicture}[auto]
\node(o) at (0,0) [shape=circle,draw,minimum size=12mm] {$\epsilon$};
\node(i) at (2,0) [shape=circle,draw,minimum size=12mm] {$2$};
\node(ii) at (4,0) [shape=circle,draw,minimum size=12mm] {$22$};
\node(iii) at (6,0) [shape=circle,draw,minimum size=12mm] {$221$};
\node(iv) at (8,0) [shape=circle,draw,minimum size=12mm] {$2210$};
\node(v) at (10,0) [shape=circle,draw,minimum size=12mm] {$22102$};

\draw[-{triangle 45}] (o) to [below] node {$2$} (i);
\draw[-{triangle 45}] (i) to [below] node {$2$} (ii);
\draw[-{triangle 45}] (ii) to [below] node {$1$} (iii);
\draw[-{triangle 45}] (iii) to [below] node {$0$} (iv);
\draw[-{triangle 45}] (iv) to [below] node {$2$} (v);

\draw[-{triangle 45}] (i) to [out=220, in=320, below] node {$0,1$} (o);
\draw[-{triangle 45}] (ii) to [out=150, in=30, above right] node {$0$} (o);
\draw[-{triangle 45}] (iv) to [out=140, in=40, above] node {$0,1$} (o);

\node(x) at (11,0) {$\cdots$};

\draw[-{triangle 45}] (o) to [in=170, out=90, loop above] node {$0,1$} ();
\end{tikzpicture}
\caption{The Fischer graph $\graph{F}_{X_\beta}$.}
\label{Bshift}
\end{figure}

\begin{theorem}\label{primeBeta}
Every non-sofic beta-shift is topologically direct prime.
\end{theorem}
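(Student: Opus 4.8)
The plan is to reuse the strategy of Theorems~\ref{primeDyck} and~\ref{primeS}: show that $\graph{F}_{X_\beta}$ contains no eventually geodesic strictly proximal pair, observe that $X_\beta$ has a fixed point, and then invoke Corollary~\ref{critCor}. The fixed point is immediate: since $x_\beta[0]>0$ we have $0^\infty<x_\beta$ lexicographically, hence $0^\Z[i,\infty]=0^\infty\leq x_\beta$ for all $i$, so $0^\Z\in X_\beta$.

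For the main claim, I would argue by contradiction from an eventually geodesic strictly proximal pair $x,y$ on $\graph{F}_{X_\beta}$, first shifting both paths by a common amount so that $x[0,\infty]$ and $y[0,\infty]$ are geodesic, just as in the earlier proofs. Write $v_n=\foll(x_\beta[0,n])$ for $n\geq -1$, so $v_{-1}=\foll(\epsilon)$; these vertices are pairwise distinct because $x_\beta$ is not eventually periodic, and the only out-edges of $v_n$ are the spine edge to $v_{n+1}$ labeled $x_\beta[n+1]$ together with, for each $0\leq i<x_\beta[n+1]$, a return edge to $v_{-1}$ labeled $i$; in particular distinct out-edges of any vertex carry distinct labels.

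The heart of the argument is the structural observation that a geodesic right-infinite path in $\graph{F}_{X_\beta}$ uses no return edge. A geodesic path visits each vertex at most once, since a repeated vertex would exhibit a nontrivial cyclic subpath, which is not a geodesic. If $x[0,\infty]$ took a return edge it would land on $v_{-1}$ and then be forced onto the spine $v_{-1}\to v_0\to v_1\to\cdots$ — the only non-return out-edge of $v_{-1}$ is its spine edge, and any later return edge would revisit $v_{-1}$ — so running along this tail the path eventually revisits $\init{x[0]}$, a contradiction. Hence $x[0,\infty]$ follows the spine, so $\init{x[i]}=v_{k+i}$ and $\lbl(x[i])=x_\beta[k+i+1]$ for all $i\geq 0$, where $v_k=\init{x[0]}$; likewise $\init{y[i]}=v_{l+i}$ and $\lbl(y[i])=x_\beta[l+i+1]$. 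Proximality supplies some $i\geq 0$ with $x[i]=y[i]$, whence $v_{k+i}=\init{x[i]}=\init{y[i]}=v_{l+i}$, so $k=l$ by distinctness of the $v_n$; then $\init{x[i]}=\init{y[i]}$ and $\lbl(x[i])=\lbl(y[i])$ hold for every $i\geq 0$, so $x[i]=y[i]$ for all $i\geq 0$ by the determinism noted above, contradicting strict proximality.

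I expect the one genuinely delicate point to be the structural lemma that geodesic rays avoid return edges — in particular making ``taking a return edge eventually forces a revisited vertex'' airtight, including the boundary case $\init{x[0]}=v_{-1}$. Everything else is a near-verbatim adaptation of the $S$-gap argument, the underlying reason being that $\graph{F}_{X_\beta}$ is a deterministic presentation of $X_\beta$ whose only geodesic rays read off suffixes of $x_\beta$.
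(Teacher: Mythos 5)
Your proposal is correct and follows essentially the same route as the paper: show that $\graph{F}_{X_\beta}$ has no eventually geodesic strictly proximal pair (because geodesic rays must stay on the ``spine'' $\foll(\epsilon)\to\foll(x_\beta[0,0])\to\foll(x_\beta[0,1])\to\cdots$ and hence never branch), note the fixed point $0^\Z$, and invoke Corollary~\ref{critCor}; the paper phrases the final contradiction as ``agree at $i$, disagree at $i+1$, same initial vertex, so one of the two edges must return to $\foll(\epsilon)$'' rather than via your ``$k=l$'' argument, but that difference is cosmetic. One local repair is needed at exactly the point you flagged: the claim that ``a nontrivial cyclic subpath is not a geodesic'' is false under the paper's definition, since geodesics are shortest \emph{nonempty} paths between their endpoints, so e.g.\ the self-loop at $\foll(\epsilon)$ is itself a geodesic. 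The fix is to extend the cycle by one edge: if a geodesic ray satisfies $\init{x[i]}=\init{x[j]}=v$ with $i<j$, then $x[i,j]$ has length $j-i+1\geq 2$ while the single edge $x[j]$ already joins $v$ to $\term{x[j]}$, so $x[i,j]$ is not geodesic; with this patch your no-revisit lemma, and hence the whole argument, goes through.
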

\begin{proof}
We claim that $\graph{F}_{X_\beta}$ does not contain an eventually geodesic strictly proximal pair, so assume to the contrary that $x,y$ is such a pair. Since these paths are eventually geodesic, we may assume up to shifting the paths that $x[0,\infty]$ and $y[0,\infty]$ are geodesic. Then necessarily $\term{x[i]}\neq\epsilon$ and $\term{y[i]}\neq\epsilon$ for $i\in\N$. On the other hand, since $x,y$ is a strictly proximal pair, we have $x[i]=y[i]$ and $x[i+1]\neq y[i+1]$ for some $i\in\N$. From $x[i]=y[i]$ it follows that $\init{x[i+1]}=\init{y[i+1]}$. This combined with $x[i+1]\neq y[i+1]$ implies that either $\term{x[i+1]}=\epsilon$ or $\term{y[i+1]}=\epsilon$, a contradiction.

Since the Fischer graph of $X_\beta$ does not contain an eventually geodesic strictly proximal pair and $X_\beta$ contains a fixed point $0^\Z$, it follows from Corollary~\ref{critCor} that $X_\beta$ is topologically direct prime.
\end{proof}

\section{A New Class of Direct Prime Subshifts}\label{starStudSect}

Up to this point we have applied Theorem~\ref{crit} to previously known classes of subshifts. The theorem can also be used in constructing new direct prime subshifts with some additional desirable properties. As an example we present a construction that shows that a direct prime non-sofic synchronized subshift can have RCA with only sensitive directions: recall from the introduction that the existence of such RCA is trivial if the subshift is a product of two infinite subshifts.

\begin{definition}\label{studDef}
For any subshift $X\subseteq A^\Z$ there is a star-studded version $\stud{X}$ containing a new symbol $*$, consisting of all configurations created by taking an arbitrary $x\in X$ and interleaving the symbol $*$ so that no two consecutive $*$ occur. To be more precise, let $h:(A\cup\{*\})^*\to A^*$ be a substitution determined by $h(*)=\epsilon$ and $h(a)=a$ for $a\in A$. If $X$ is characterized by a set $\mathcal{F}\subseteq A^*$ of forbidden words, then $\stud{X}$ is the subshift over $A\cup\{*\}$ characterized by the set of forbidden words
\[\mathcal{F}'=\{**\}\cup\{w\in (A\cup\{*\})^*\mid h(w)\in\mathcal{F}\}.\]
\end{definition}

Star-studded subshifts have RCA that resemble partial shift maps but do not rely on direct factorizations. These turn out to have all directions sensitive.

\begin{definition}\label{starCAdef}
For any subshift $X$ define the RCA $F_X:\stud{X}\to\stud{X}$ by
\begin{flalign*}
F_X(x)[i]=
\left\{
\begin{array}{l}
*\text{ when } x[i]=*, \\
x[i+1]\text{ when } x[i]x[i+1]\in A^2, \\
x[i+2]\mbox{ when } x[i]\in A \mbox{ and } x[i+1]=*.
\end{array}
\right.
\end{flalign*}
The CA $F_X$ fixes all occurrences of the symbol $*$ in configurations and shifts any occurrence of any symbol $a\in A$ to the left so that the symbol $a$ skips over any occurence of $*$.
\end{definition}

\begin{proposition}\label{starCA}
If $X\subseteq A^\Z$ is an infinite transitive subshift then $F_X:\stud{X}\to\stud{X}$ has all directions sensitive. 
\end{proposition}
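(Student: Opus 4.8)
The plan is to show that for every rational $p/q$ (with $q > 0$ and $\gcd(p,q) = 1$) the map $\sigma^p \circ F_X^q$ is sensitive, i.e.\ by Proposition~\ref{equiblock} it suffices to show that $\sigma^p \circ F_X^q$ has no blocking word. Fix a candidate blocking word $w \in L(\stud{X})$ with the associated parameters $e$ and $p'$ from the definition of blocking word. The intuitive reason sensitivity holds is that the symbol $*$ moves differently from the symbols of $A$: under $F_X$, occurrences of $*$ stay put while the $A$-symbols drift left, so the relative positions of $*$'s and $A$-symbols keep changing, and no finite window can be ``frozen'' by pinning down finitely many coordinates. Under $\sigma^p \circ F_X^q$ the $*$'s drift by $-p$ per step while the $A$-symbols drift by roughly $-p - q$ (corrected by however many $*$'s they skip), so again the two populations move at genuinely different speeds whenever $q \neq 0$, which always holds.

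First I would make precise how a single $A$-symbol travels. Starting from a configuration $x \in \stud{X}$, track the coordinate of a particular $A$-symbol: after $n$ applications of $F_X$ it has moved left by $n$ plus the number of $*$'s it has jumped over, and the $*$'s it jumps over are exactly those lying in the interval it sweeps through. Since $X$ is infinite and transitive, $L(X)$ is infinite, so $\stud{X}$ contains configurations with arbitrarily long runs of $A$-symbols and also configurations with $*$'s spaced so that a symbol skips a prescribed number of them over a given time; this flexibility is what I will exploit. Concretely, given $w$, $e$, $p'$, I would build two configurations $x, y \in \cyl_{\stud{X}}(w,0)$ agreeing on $w$ but arranged so that, for some $n$, the window $[p', p'+e-1]$ of $(\sigma^p \circ F_X^q)^n(x)$ and of $(\sigma^p \circ F_X^q)^n(y)$ differ. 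The simplest way: to the right of coordinate $0$ place in $x$ a long block of $A$-symbols from $L(X)$ (possible by transitivity, gluing onto the suffix of $w$), and in $y$ the same block but with one $*$ inserted somewhere far to the right; choose $n$ large enough that the ``front'' of this $A$-block, after $n$ steps of $\sigma^p \circ F_X^q$, has been carried into the observation window $[p', p'+e-1]$, and note that in $y$ the extra $*$ changes the arrival coordinate of everything behind it by one, so the two windows cannot coincide for all $n$. One must also handle the case where the block consists of $*$'s being passed: pick the insertion point so the discrepancy is visible within the window. Because $e \ge r + 1 \ge 1$ and the window is bounded while the relevant blocks can be taken arbitrarily long and arbitrarily far out, such an $n$ exists.

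The main obstacle will be the bookkeeping of the composite map $\sigma^p \circ F_X^q$: I need a clean closed-form (or at least a monotone, controllable) description of where a given symbol sits after $n$ iterations, accounting simultaneously for the shift by $p$ each step, the $q$-fold application of $F_X$, and the variable number of $*$'s skipped — which itself depends on the evolving configuration. I would isolate this as a lemma: for $x \in \stud{X}$ and a coordinate $i$ with $x[i] \in A$, the image of this symbol after applying $\sigma^p \circ F_X^q$ $n$ times lands at coordinate $i - np - (\text{number of }*\text{'s swept}) \le i - np$, with the sweep count bounded below by the number of $*$'s of $x$ in a suitable interval, while $*$-symbols land exactly at $i - np$. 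Granting that lemma, sensitivity follows by the window argument above, and then Proposition~\ref{equiblock} gives that $\sigma^p \circ F_X^q$ is not almost equicontinuous, i.e.\ $p/q$ is a sensitive direction; since $p/q$ was arbitrary, $F_X$ has all directions sensitive. (Note $\stud{X}$ is transitive because $X$ is, so Proposition~\ref{equiblock} applies.)
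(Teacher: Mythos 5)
Your overall strategy --- rule out a blocking word for every $\sigma^p\circ F_X^q$ via Proposition~\ref{equiblock}, exploiting that $*$'s and $A$-symbols drift at different speeds --- is the same as the paper's, and your construction is essentially workable when $p\ge 0$ (the paper's version of that case is cleaner: it takes $x,y$ agreeing on $x[-\infty,-1]=y[-\infty,-1]$ and with \emph{no} stars to the right of $w$, so every $A$-symbol travels at exactly the same speed in both configurations and a single disagreement at some $i\ge\abs{w}$ is carried verbatim past coordinate $-1$, contradicting the blocking property on the left tail).

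The genuine gap is the case $p<0$. Your only construction places the perturbation to the \emph{right} of $w$ and waits for it to be ``carried into'' the window $[p',p'+e-1]$; this requires the $A$-symbols to drift leftward under $G=\sigma^p\circ F_X^q$, which needs roughly $p+q>0$. When $p\le -q$, a star-free block of $A$-symbols has non-positive leftward drift while the inserted star drifts right at speed $-p>0$, so nothing you placed to the right of the window ever returns to it. Worse, when $p\le -2q$ the map $G$ has memory $p$ and anticipation $2q+p\le 0$, so information cannot propagate leftward at all and \emph{no} perturbation to the right of the window can ever be detected there: in that regime the approach is not just incomplete but unsalvageable. The missing idea is to reverse the roles for $p<0$: put the distinguishing feature to the \emph{left} of $w$ (the paper takes $x[-2]=*\neq y[-2]$ with $x[\abs{w},\infty]=y[\abs{w},\infty]$) and use the fact that under $G$ every star moves right at the exact constant speed $-p>0$, so the extra star eventually surfaces at a coordinate $j\ge\abs{w}$, contradicting the blocking property on the right tail. (A smaller slip: your displacement lemma should read that an $A$-symbol at $i$ lands at $i-np-nq-(\text{stars swept})$, not $i-np-(\text{stars swept})$; the $-nq$ term is precisely what makes the $p\ge 0$ case work.)
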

\begin{proof}
To see that $F_X$ has all directions sensitive, assume to the contrary that there is an almost equicontinuous direction $p/q$ for coprime integers $p$ and $q$ such that $q>0$. This means that $G=\sigma^p\circ F_X^q$ is almost equicontinuous and admits a blocking word $w\in L(\stud{X})$.

Assume first that $p\geq 0$. If $x,y\in\cyl(w,0)$ are such that $x[-\infty,-1]=y[-\infty,-1]$, then $G^n(x)[-\infty,-1]=G^n(y)[-\infty,-1]$ for all $n\geq 0$ because $w$ is blocking. Choose such $x,y$ so that additionally $x[\abs{w},\infty],y[\abs{w},\infty]\in X^+$ and $x[i]\neq y[i]$ for some $i\geq\abs{w}$: this can be done because $X$ is infinite and transitive. From $p\geq 0$ it follows that $G$ shifts any occurrence of any symbol $a\in A$ to the left at a non-zero speed which is equal in both $x$ and $y$ and therefore $G^n(x)[-\infty,-1]\neq G^n(y)[-\infty,-1]$ for some $n>0$, a contradiction.

Assume then that $p<0$. If $x,y\in\cyl(w,0)$ are such that $x[\abs{w},\infty]=y[\abs{w},\infty]$, then $G^n(x)[\abs{w},\infty]=G^n(y)[\abs{w},\infty]$ for all $n\geq 0$ because $w$ is blocking. Choose such $x,y$ so that additionally $x[-2]=*\neq y[-2]$.  From $p<0$ it follows that $G$ shifts any occurrence of $*$ to the right at a constant non-zero speed and therefore $G^n(x)[j]=*\neq G^n(y)[j]$ for some $n>0$ and $j\geq\abs{w}$, a contradiction.
\end{proof}

Now that we have seen that star-studded subshifts have RCA with all directions sensitive, it remains to show that this class of subshifts contains direct prime non-sofic synchronized subshifts. First we show that to some extent the star-studded subshift inherits properties from the original one.

\begin{lemma}\label{distinctV}
If $X\subseteq A^\Z$ is a subshift and if $x_1^-,x_2^-\in X^-$, then $\foll_\stud{X}(x_1^-)\neq\foll_\stud{X}(x_2^-*)$. If additionally $\foll_X(x_1^-)\neq\foll_X(x_2^-)$, then $\foll_\stud{X}(x_1^-)\neq\foll_\stud{X}(x_2^-)$ and $\foll_\stud{X}(x_1^-*)\neq\foll_\stud{X}(x_2^-*)$.
\end{lemma}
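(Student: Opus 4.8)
The plan is to exploit the single defining constraint of $\stud{X}$ — namely that $**$ is forbidden — together with the substitution $h$ collapsing $*$'s. The key observation is that a right-infinite sequence over $A\cup\{*\}$ lies in $\stud{X}^+$ iff it avoids $**$ and its $h$-image lies in $X^+$, and moreover any such sequence can be decomposed according to whether its first symbol is $*$ or not. In particular, if $y^+\in\foll_\stud{X}(x^-)$ then either $y^+$ begins with a symbol of $A$, or $y^+ = * z^+$ with $z^+$ beginning with a symbol of $A$ and $z^+\in\foll_\stud{X}(x^-*)$ (and also, trivially, the constraint from $x^-$ is only about the last symbol of $x^-$ being $*$ or not). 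I will use these decompositions to separate follower sets by exhibiting, in each case, a word admissible after one left-context but not after the other.

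For the first claim, that $\foll_\stud{X}(x_1^-)\neq\foll_\stud{X}(x_2^-*)$: I would pick any symbol $a\in A$ such that $x_1^- a\in X^-$ (such an $a$ exists since $x_1^-\in X^-$ extends to a configuration of $X$). Then $* a$ is a legal right-infinite prefix after $x_1^-$ in $\stud{X}$ (the junction $x_1^- *$ is fine because $x_1^-$ ends in a symbol of $A$, or if $x_1^-$ ends in $*$ one instead works with a suitably chosen prefix — but note $x_1^-\in X^-$ so it has no $*$'s at all, which removes this worry entirely). However $* a$ cannot follow $x_2^- *$, because that would create $**$. Hence the word $*a\in A\cup\{*\}$ certifies $\foll_\stud{X}(x_1^-)\neq\foll_\stud{X}(x_2^-*)$. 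The fact that $x_1^-, x_2^-\in X^-$ contain no $*$ symbols is what makes this clean: the only obstruction to placing a $*$ immediately is whether the left context already ends in $*$.

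For the second part, assume additionally $\foll_X(x_1^-)\neq\foll_X(x_2^-)$, so fix $v^+\in X^+$ with (say) $v^+\in\foll_X(x_1^-)\setminus\foll_X(x_2^-)$. To show $\foll_\stud{X}(x_1^-)\neq\foll_\stud{X}(x_2^-)$, note $v^+$ itself (viewed as a $*$-free sequence over $A\cup\{*\}$) lies in $\foll_\stud{X}(x_1^-)$ since $h(v^+)=v^+\in\foll_X(x_1^-)$ and no $**$ appears; but $v^+\notin\foll_\stud{X}(x_2^-)$, since $x_2^- v^+\in\stud{X}^-$ would force $h(x_2^-)h(v^+)=x_2^- v^+\in X^-$, i.e. $v^+\in\foll_X(x_2^-)$, contradiction. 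For $\foll_\stud{X}(x_1^-*)\neq\foll_\stud{X}(x_2^-*)$ the same $v^+$ works with essentially the same argument: $v^+$ may follow $x_i^-*$ in $\stud{X}$ iff ($v^+$ avoids $**$, which it does, and) $h(x_i^-*)h(v^+)=x_i^- v^+\in X^-$, i.e. iff $v^+\in\foll_X(x_i^-)$; so $v^+\in\foll_\stud{X}(x_1^-*)\setminus\foll_\stud{X}(x_2^-*)$.

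The only point requiring care — and the one I'd treat as the ``main obstacle'', though it is mild — is the precise membership criterion ``$x^- y^+\in\stud{X}^-$ (resp. $\in\stud{X}$) iff $x^-y^+$ avoids $**$ and $h(x^-)h(y^+)$ is legal in $X$'', which must be extracted carefully from Definition~\ref{studDef} for one-sided sequences (the forbidden set $\mathcal{F}'$ is phrased for two-sided configurations, so one should note that a one-sided sequence lies in $\stud{X}^\pm$ iff it extends to a configuration of $\stud{X}$, equivalently iff every finite subword avoids $**$ and has $h$-image in $L(X)$). Once this characterization is recorded, all three inequalities follow immediately from the two decomposition observations above, and the role of the hypothesis $x_i^-\in X^-$ (no $*$ symbols, so $h$ acts as the identity on them and left contexts never already end in $*$) is exactly what prevents any degenerate case.
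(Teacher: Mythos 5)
Your proposal is correct and follows essentially the same route as the paper: a right-infinite sequence beginning with $*$ witnesses $\foll_\stud{X}(x_1^-)\neq\foll_\stud{X}(x_2^-*)$ (since $**$ is forbidden), and for the second part the star-free followers of $x_i^-$ (equivalently $x_i^-*$) in $\stud{X}$ are exactly $\foll_X(x_i^-)$. The only cosmetic difference is that you phrase the witnesses via a finite word $*a$ and an explicit $v^+$, whereas the paper uses a full right-infinite $*x^+$ and the set identity $\foll_\stud{X}(x_i^-)\cap A^\N=\foll_X(x_i^-)$; the substance is identical.
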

\begin{proof}
Let $x\in X$ be such that $x^+\in\foll_X(x_1^-)$. Then $*x^+\in \foll_X(x_1^-)$ but $*x^+\notin\foll_\stud{X}(x_2^-*)$.

If $\foll_X(x_1^-)\neq\foll_X(x_2^-)$, then
\begin{flalign*}
& \foll_\stud{X}(x_1^-)\cap A^\N=\foll_X(x_1^-)\neq\foll_X(x_2^-)=\foll_\stud{X}(x_2^-)\cap A^\N \mbox{ and}\\
& \foll_\stud{X}(x_1^-*)\cap A^\N=\foll_X(x_1^-)\neq\foll_X(x_2^-)=\foll_\stud{X}(x_2^-*)\cap A^\N.
\end{flalign*}
\end{proof}

\begin{lemma}\label{studInherit}
If $X$ is half-synchronized, synchronized or non-sofic then $\stud{X}$ is half-synchronized, synchronized or non-sofic respectively. A half-syncronizing word of $X$ is also a half-synchronizing word of $\stud{X}$. 
\end{lemma}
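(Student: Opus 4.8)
The plan is to reduce everything to $X$ through the symbol-deleting substitution $h$ of Definition~\ref{studDef}. The first step I would take is to record the elementary description
\[L(\stud{X})=\{u\in(A\cup\{*\})^*\mid u\text{ has no factor }**\text{ and }h(u)\in L(X)\}\]
(one inclusion is clear; for the other, extend $h(u)$ to a configuration of $X$ and place the $*$-pattern of $u$ at an occurrence of $h(u)$, inserting no further $*$). In the same way, a concatenation $y^-z^+$ of a left-infinite $y^-$ over $A\cup\{*\}$ with a right-infinite $z^+$ lies in $\stud{X}$ iff it has no factor $**$ and $h(y^-)h(z^+)\in X$. From this I would extract two facts. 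First, $X\subseteq\stud{X}$ (configurations with no $*$), hence $X^-\subseteq\stud{X}^-$ and $L(X)\subseteq L(\stud{X})$, and in particular every half-synchronizing word of $X$ already belongs to $L(\stud{X})$. Second, if $y^-\in\stud{X}^-$ ends in a symbol of $A$ -- e.g.\ if $y^-[-\abs{w},-1]=w$ for a nonempty $w\in L(X)$, or if one simply arranges $y^-[-1]\in A$ -- then
\[\foll_\stud{X}(y^-)=\{z^+\in\stud{X}^+\mid z^+\text{ has no factor }**\text{ and }h(z^+)\in\foll_X(h(y^-))\},\]
and the same formula (with $h(y^-)$ replaced by $w$) describes $\foll_\stud{X}(w)$; hence $\foll_\stud{X}(y^-)=\foll_\stud{X}(w)$ whenever $\foll_X(h(y^-))=\foll_X(w)$.

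The non-sofic case is then immediate: if $X$ is non-sofic, pick $x_1^-,x_2^-,\dots\in X^-\subseteq\stud{X}^-$ with the follower sets $\foll_X(x_i^-)$ pairwise distinct, and apply the second part of Lemma~\ref{distinctV} to conclude that the $\foll_\stud{X}(x_i^-)$ are pairwise distinct; thus $\stud{X}$ has infinitely many distinct follower sets and is non-sofic. For the other two cases I would first check that $\stud{X}$ is transitive whenever $X$ is: given $u,v\in L(\stud{X})$, transitivity of $X$ gives a nonempty word $t$ with $h(u)\,t\,h(v)\in L(X)$, and inserting one $*$ between each pair of consecutive symbols of $t$ yields a connector $s$ with $usv\in L(\stud{X})$.

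Finally, let $w\in L(X)$ be half-synchronizing for $X$, witnessed by $x^-\in X^-$ with $x^-[-\abs{w},-1]=w$, $L(x^-)=L(X)$ and $\foll_X(x^-)=\foll_X(w)$. Since $X$ is transitive and $L(x^-)=L(X)$, every word of $L(X)$ occurs in $x^-$ infinitely often (if $v$ occurs, then so does a word built from $k$ copies of $v$ and bridges, for every $k$). I would then construct a witness $y^-$ for $\stud{X}$ by star-studding $x^-$: fix an enumeration $u_1,u_2,\dots$ of $L(\stud{X})$ and, for each $k$, choose an occurrence of $h(u_k)$ in $x^-$ lying strictly to the left of the terminal block $w$ and disjoint from the finitely many windows already used, then insert $*$'s into the gaps of that occurrence so as to spell $u_k$; leave all remaining gaps empty (and, if $w=\epsilon$, make sure $y^-$ does not end in $*$). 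The resulting $y^-\in\stud{X}^-$ has no factor $**$, satisfies $h(y^-)=x^-$ and $y^-[-\abs{w},-1]=w$, and has $L(y^-)=L(\stud{X})$ (each $u_k$ occurs by construction, and conversely every factor of $y^-$ is a star-studding of a factor of $x^-\in L(X)$, hence lies in $L(\stud{X})$). By the second fact of the first paragraph, $\foll_\stud{X}(y^-)=\foll_\stud{X}(w)$ since $\foll_X(h(y^-))=\foll_X(x^-)=\foll_X(w)$, so $w$ is half-synchronizing for the transitive subshift $\stud{X}$; this proves both that $\stud{X}$ is half-synchronized and that $w$ remains half-synchronizing. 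If moreover $w$ is synchronizing for $X$, then \emph{every} $\tilde y^-\in\stud{X}^-$ with $\tilde y^-[-\abs{w},-1]=w$ has $h(\tilde y^-)\in X^-$ ending in $w$, so $\foll_X(h(\tilde y^-))=\foll_X(w)$, whence $\foll_\stud{X}(\tilde y^-)=\foll_\stud{X}(w)$ by the same formula; thus $w$ is synchronizing for $\stud{X}$ and $\stud{X}$ is synchronized. I expect the main obstacle to be the construction of $y^-$: one must notice that the given $X$-witness already contains every word of $L(X)$ infinitely often and then carry out the bookkeeping that realizes every element of $L(\stud{X})$ as a factor of $y^-$; once that is in place, the follower-set identities are purely formal.
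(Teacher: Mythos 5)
Your proposal is correct and follows essentially the same route as the paper: it establishes transitivity of $\stud{X}$, builds the half-synchronizing witness by inserting stars into $x^-$ using the fact that every word of $L(X)$ occurs in $x^-$ infinitely often, observes that any $\tilde y^-$ ending in a (nonempty) synchronizing $w$ is a star-studding of some $X$-ray ending in $w$, and derives non-soficness from Lemma~\ref{distinctV}. The only difference is presentational: you make explicit the follower-set formula via the substitution $h$, which the paper treats as ``automatic.''
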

\begin{proof}
It is easy to see that if $X$ is transitive, then $\stud{X}$ is also transitive.

If $X$ is half-synchronized, then choose any half-synchronizing $w\in L(X)$ and a sequence $x^{-}\in X^{-}$ with $x^{-}[-\abs{w},-1]=w$ satisfying $L(x^-)=L(X)$ and $\foll_X(x^-)=\foll_X(w)$. We claim that $w$ is a half-synchronizing word also in $\stud{X}$. To see this, note that if $y^-\in\stud{X}^-$ is formed by inserting stars $*$ into $x^-$ so that $y^-[-\abs{w}-1]=w$ (and $y^-[-1]\neq*$ if $w=\epsilon$), then automatically $\foll_\stud{X}(y^-)=\foll_\stud{X}(x^-)=\foll_\stud{X}(w)$. Therefore all we need to do is insert the stars so that $L(y^-)=L(\stud{X})$. This can be done, because any word $u\in L(X)$ actually occurs in $x^-$ infinitely many times, so it is possible to insert stars into the occurences of $u$ in $x^-$ in all the possible ways.

If $X$ is synchronized, then choose any synchronizing $w\in L(X)\setminus\{\epsilon\}$. We claim that $w$ is a synchronizing word also in $\stud{X}$. Namely, if $y^-\in\stud{X}^-$ with $y^-[-\abs{w}-1]=w$, then it can be formed by inserting stars $*$ into some $x^-\in X^-$ such that $x^-[-\abs{w}-1]=w$, and $\foll_X(x^-)=\foll_X(w)$ because $w$ is synchronizing. Therefore also $\foll_\stud{X}(y^-)=\foll_\stud{X}(x^-)=\foll_\stud{X}(w)$.

If $X\subseteq A^\Z$ is non-sofic, then $\{\foll_X(x^-)\mid y\in X\}$ is infinite. If $x_1,x_2\in X$ are such that $\foll_X(x_1^-)\neq\foll_X(x_2^-)$, then also $\foll_\stud{X}(x_1^-)\neq \foll_\stud{X}(x_2^-)$ by Lemma~\ref{distinctV}, so $\{\foll_\stud{X}(x^-)\mid x\in \stud{X}\}$ is infinite and $\stud{X}$ is non-sofic.
\end{proof}

We make the following definition to make sense of what Krieger graphs and Fischer graphs of star-studded shifts look like.

\begin{definition}
For any labeled graph $\graph{G}=(V,E)$ the star-studded version of $\graph{G}$ is $\stud{\graph{G}}=(V\cup \stud{V},E\cup \stud{E})$ with $V,\stud{V}$disjoint and $E,\stud{E}$ disjoint as follows. Let $\stud{V}=\{\stud{v}\mid v\in V\}$ be a copy of $V$, let $\stud{E}$ contain an edge labeled by $*$ from $v$ to $\stud{v}$ for each $v\in V$ and for any edge in $E$ labeled by $a$ from $v$ to $w$ for some $v,w\in V$ let $\stud{E}$ contain an edge labeled by $a$ from $\stud{v}$ to $w$.
\end{definition}

This definition has the nice property that the non-existence of eventually geodesic strictly proximal pairs in a graph is inherited by the star-studded version of the graph.

\begin{lemma}\label{studProx}
If a labeled graph $\graph{G}$ does not contain an eventually geodesic strictly proximal pair, then neither does $\stud{\graph{G}}$.
\end{lemma}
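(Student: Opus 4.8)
The plan is to prove the contrapositive: assuming $\stud{\graph{G}}$ contains an eventually geodesic strictly proximal pair, I would exhibit one inside $\graph{G}$. The single observation doing all the work is that \emph{no geodesic path of $\stud{\graph{G}}$ uses a $*$-edge}. Indeed, if $p$ is a geodesic and $p[i]$ is the $*$-edge from $v$ to $\stud{v}$, then the only edges leaving $\stud{v}$ are the copies $\stud{e}$ of the original edges $e\colon v\to w$ of $\graph{G}$, so $p[i+1]=\stud{e}$ for some such $e$ and $p[i,i+1]$ is a path of length $2$ from $v$ to $w$; but $e$ itself is an edge of $\stud{\graph{G}}$ realising the same endpoints in one step, so $p[i,i+1]$ is not a geodesic, a contradiction. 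Consequently, if $p=p[0]p[1]\cdots$ is a right-infinite geodesic of $\stud{\graph{G}}$, then $p[i]$ is an edge of $\graph{G}$ for every $i\ge 1$ and every vertex $\term{p[i]}$ with $i\ge 0$ lies in $V$; moreover, since every path of $\graph{G}$ is also a path of $\stud{\graph{G}}$, each finite subpath of $p[1,\infty]$ is a $\stud{\graph{G}}$-geodesic between two vertices of $V$ and hence a geodesic of $\graph{G}$, so $p[1,\infty]$ is geodesic in $\graph{G}$ as well.

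Granting this, I would argue as follows. Take an eventually geodesic strictly proximal pair $\tilde{x},\tilde{y}\in P(\stud{\graph{G}})$. Since eventual geodesicity and the property of being a strictly proximal pair are invariant under shifting both paths by the same amount, I may assume that $\tilde{x}[0,\infty]$ and $\tilde{y}[0,\infty]$ are geodesic; then $\tilde{x}[1,\infty]$ and $\tilde{y}[1,\infty]$ are right-infinite geodesic paths lying entirely in $\graph{G}$. To make these into bi-infinite paths of $\graph{G}$, I would note that in $\tilde{x}[-\infty,0]$ every $\stud{V}$-vertex is isolated and occurs inside a length-two stretch $v\to\stud{v}\to w$ with $e\colon v\to w$ an edge of $\graph{G}$; replacing each such stretch by $e$ converts $\tilde{x}[-\infty,0]$ into a left-infinite path of $\graph{G}$ with terminal vertex $\init{\tilde{x}[1]}\in V$, and concatenating it with $\tilde{x}[1,\infty]$ yields a path $x\in P(\graph{G})$; similarly one gets $y\in P(\graph{G})$. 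Since all $\stud{V}$-vertices of $\tilde{x}$ lie at coordinates $\le 0$, this ``star removal'' re-indexes only the non-positive part, so I can arrange that $x[i]=\tilde{x}[i]$ and $y[i]=\tilde{y}[i]$ for all $i\ge 1$.

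It then remains to check that $x,y$ is an eventually geodesic strictly proximal pair of $\graph{G}$, which contradicts the hypothesis and finishes the proof. Eventual geodesicity is immediate, since $x[1,\infty]=\tilde{x}[1,\infty]$ and $y[1,\infty]=\tilde{y}[1,\infty]$ are geodesic in $\graph{G}$. For proximality, given $n\in\N$ I would apply Definition~\ref{qprox} to $\tilde{x},\tilde{y}$ with window length $n+1$ to get an agreement window at some coordinate $\ge 0$ and shrink it to an agreement window at a coordinate $i\ge 1$; as $x,y$ coincide with $\tilde{x},\tilde{y}$ on coordinates $\ge 1$, this gives $x[i,i+n]=y[i,i+n]$. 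Strict proximality follows because $\tilde{x}[i]\ne\tilde{y}[i]$ for arbitrarily large $i$, and any such $i\ge 1$ also satisfies $x[i]\ne y[i]$. I do not anticipate a genuine obstacle: the conceptual content is entirely the ``no $*$-edge on a geodesic'' remark, and the only mildly error-prone step is keeping track of coordinates while extending the left tails.
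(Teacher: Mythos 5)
Your proof is correct and follows essentially the same route as the paper's: the entire argument rests on the observation that a right-infinite geodesic of $\stud{\graph{G}}$ cannot use a $*$-edge (since a stretch $v\to\stud{v}\to w$ can be shortcut by the original edge $v\to w$), which forces the geodesic tails $\tilde{x}[1,\infty],\tilde{y}[1,\infty]$ into $\graph{G}$. You are in fact slightly more careful than the paper in explicitly contracting the left tails to produce genuine bi-infinite paths of $\graph{G}$ — a step the paper's proof leaves implicit — but this is a completeness detail, not a different method.
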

\begin{proof}
Let $\graph{G}=(V,E)$ and denote by $\lbl(e)$ the label of an edge $e$ in $\stud{\graph{G}}$. Assume to the contrary that $x,y$ is an eventually geodesic strictly proximal pair on $\stud{\graph{G}}$. We may assume up to shifting these paths that $x[0,\infty]$ and $y[0,\infty]$ are geodesic. If $\lambda_X(x[i])=*$ for some $i\in\N$, then necessarily $\init{x[i]}=v$, $\term{x[i]}=\stud{v}$ for some $v\in V$ and $\term{x[i+1]}=w$ for some $w\in V$ such that $E$ contains an edge $e$ from $v$ to $w$. But then $e$ is a shorter path from $v$ to $w$ than $x[i,i+1]$, contradicting the assumption of geodesicness. Thus the label of $x[0,\infty]$ does not contain occurrences of $*$ and similarly $y[0,\infty]$ does not contain occurrences of $*$. Therefore the paths $x[1,\infty],y[1,\infty]$ are contained completely in the subgraph $\graph{G}=(V,E)$ and $\graph{G}$ contains an eventually geodesic strictly proximal pair, a contradiction.
\end{proof}

To apply the previous lemma to Fischer graphs of star-studded subshifts, we need to show that star-studded versions of Fischer graphs are isomorphic to Fischer graphs of star-studded subshifts.

By Lemma~\ref{distinctV} the vertices $\foll_X(x^-)$ of $V_X$ are in bijective correspondence with the vertices in $V_{X,1}=\{\foll_\stud{X}(x^-)\mid x\in X\}\subseteq V_{\stud{X}}$ and the vertices $\stud{\foll_X(x^-)}$ of $\stud{V_X}$ (copies of $\foll_X(x^-)$) are in bijective correspondence with the vertices in $V_{X,2}=\{\foll_\stud{X}(x^-*)\mid x\in X\}\subseteq V_{\stud{X}}$. Also by Lemma~\ref{distinctV} the sets $V_{X,1}$ and $V_{X,2}$ are disjoint. Therefore  the vertices $V_{X,1}\cup V_{X,2}$ of $\graph{K}_\stud{X}$ are in a natural bijective correspondence with $V_X\cup\stud{V_X}$. The edges between the vertices of $V_{X,1}\cup V_{X,2}$ are clearly such that $\stud{\graph{K}_X}$ is a subgraph of $\graph{K}_\stud{X}$ (up to renaming the vertices). It is also clear that $\graph{K}_\stud{X}$ contains no other types of vertices, so in fact $\stud{\graph{K}_X}$ is isomorphic to $\graph{K}_\stud{X}$ (up to renaming the vertices). We denote this graph isomorphism by $\phi_X:\stud{\graph{K}_X}\to \graph{K}_\stud{X}$ and we may denote the labeling function of both of these graphs by $\lbl_X$.

\begin{lemma}\label{studFischer}
For every half-synchronized $X$ the graph $\stud{\graph{F}_X}$ is isomorphic to $\graph{F}_\stud{X}$.
\end{lemma}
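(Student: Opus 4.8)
The plan is to use the graph isomorphism $\phi_X:\stud{\graph{K}_X}\to\graph{K}_\stud{X}$ constructed in the paragraph before the statement, and to show that it restricts to an isomorphism between $\stud{\graph{F}_X}$ and $\graph{F}_\stud{X}$. Recall that for a half-synchronized subshift the Fischer graph is characterized as the set of vertices of the Krieger graph reachable from $\foll(w)$ for any half-synchronizing word $w$ (equivalently, the maximal strongly connected subgraph through $\foll(w)$). By Lemma~\ref{studInherit}, if $w$ is a half-synchronizing word of $X$, then $w$ is also a half-synchronizing word of $\stud{X}$, and moreover $\foll_X(w)$ (as a vertex of $\graph{K}_X$, hence of $\graph{F}_X$) corresponds under $\phi_X$ to $\foll_\stud{X}(w)$ (as a vertex of $\graph{K}_\stud{X}$, hence of $\graph{F}_\stud{X}$). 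So the natural base vertices of the two Fischer graphs correspond under $\phi_X$.

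The first step is therefore to fix a half-synchronizing word $w$ of $X$ and observe that $\phi_X$ maps $\foll_X(w)$ to $\foll_\stud{X}(w)$. The second step is to show that a vertex of $\stud{\graph{K}_X}$ lies in $\stud{\graph{F}_X}$ if and only if its $\phi_X$-image lies in $\graph{F}_\stud{X}$. For the ``only if'' direction: the vertices of $\stud{\graph{F}_X}$ are exactly the vertices of the form $v$ or $\stud{v}$ with $v$ a vertex of $\graph{F}_X$, i.e. $v$ reachable from $\foll_X(w)$ in $\graph{K}_X$; a path in $\graph{K}_X$ from $\foll_X(w)$ to $v$ lifts (coordinate-wise, inserting no stars) to a path in $\stud{\graph{K}_X}$ from $\foll_X(w)$ to $v$, and then the edge labeled $*$ gives a path to $\stud{v}$; applying $\phi_X$ yields a path in $\graph{K}_\stud{X}$ from $\foll_\stud{X}(w)$ to the image vertex, so that vertex lies in $\graph{F}_\stud{X}$. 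For the ``if'' direction: if a vertex $u$ of $\graph{K}_\stud{X}$ is reachable from $\foll_\stud{X}(w)$, then since $\graph{K}_\stud{X}$ has no vertices other than those in $V_{X,1}\cup V_{X,2}$ (as noted before the statement), $u=\phi_X(v)$ or $u=\phi_X(\stud{v})$ for some $v$ in $\graph{K}_X$; a path realizing the reachability of $u$, pulled back through $\phi_X$, is a path in $\stud{\graph{K}_X}$, and deleting the $*$-edges from it yields a path in $\graph{K}_X$ from $\foll_X(w)$ to $v$ (using that in $\stud{\graph{K}_X}$ every non-$*$ edge out of $\stud{v}$ has the same terminus as the corresponding edge out of $v$, and every $*$-edge out of $v$ goes to $\stud{v}$), so $v$ is a vertex of $\graph{F}_X$ and hence $u$ is a vertex of $\stud{\graph{F}_X}$ (identified via $\phi_X$). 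Since $\phi_X$ is already a graph isomorphism preserving labels, the corresponding edges between these vertices match up automatically, so $\phi_X$ restricts to the desired isomorphism $\stud{\graph{F}_X}\to\graph{F}_\stud{X}$.

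The main obstacle is bookkeeping rather than a conceptual difficulty: one must verify cleanly that $\foll_X(w)$ really is the vertex that $\phi_X$ identifies with $\foll_\stud{X}(w)$ (this is where Lemma~\ref{distinctV} and the construction of $\phi_X$ are invoked, since $w$ does not end in $*$), and that deleting $*$-edges from a path in $\stud{\graph{K}_X}$ produces a genuine path in $\graph{K}_X$ — which requires knowing precisely how the edges of $\stud{\graph{K}_X}$ sit over those of $\graph{K}_X$. Once the vertex sets are shown to correspond under $\phi_X$, the edge correspondence is immediate from the fact that $\phi_X$ is a labeled-graph isomorphism and that the Fischer graph is a full (induced) subgraph of the Krieger graph on its vertex set.
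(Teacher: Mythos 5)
Your proposal is correct and takes essentially the same approach as the paper: both identify $\foll_X(w)$ with $\foll_\stud{X}(w)$ under $\phi_X$ for a half-synchronizing word $w$ (via Lemma~\ref{studInherit}) and then transport the Fischer subgraph through the isomorphism. The paper simply invokes that $\phi_X$, being a graph isomorphism, carries the maximal strongly connected subgraph through $\foll_X(w)$ to the one through $\foll_\stud{X}(w)$, where you instead verify the reachability correspondence explicitly by lifting paths and deleting $*$-edges --- a more detailed but equivalent argument.
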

\begin{proof}
Let $w\in L(X)$ be a half-synchronizing word of $X$, so it is also a half-synchronizing word of $\stud{X}$ by Lemma~\ref{studInherit}. Fix some $x^-\in X^-$ such that $\foll_X(w)=\foll_X(x^-)$. The set $\foll_X(w)$ is a vertex in $\stud{\graph{K}_X}$ (because it contains $\graph{K}_X$ as a subgraph) and $\phi_X(\foll_X(w))=\phi_X(\foll_X(x^-))=\foll_\stud{X}(x^-)=\foll_\stud{X}(w)$. The Fischer graph $\graph{F}_\stud{X}$ is the maximal strongly connected subgraph of $\graph{K}_{\stud{X}}$ containing $\foll_\stud{X}(w)$, so $\graph{F}_\stud{X}$ is isomorphic to the maximal strongly connected subgraph of $\stud{\graph{K}_X}$ containing $\phi_X^{-1}(\foll_\stud{X}(w))=\foll_X(w)$. This consists of the graph $\graph{F}_X$ together with the vertices of $\stud{V_X}$ and edges of $\stud{E_X}$ reachable from $\graph{F}_X$, and these form the graph $\stud{\graph{F}_X}$.
\end{proof}

Now we may apply Lemma~\ref{studProx} to Fischer graphs. After that we will prove the main results of this section.

\begin{lemma}\label{studFProx}
If the Fischer graph of a half-synchronized subshift $X$ does not contain an eventually geodesic strictly proximal pair, then neither does the Fischer graph of $\stud{X}$.
\end{lemma}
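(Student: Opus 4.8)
The plan is to chain together the two structural results just proved. Note first that $X$ and $\stud{X}$ are in general not conjugate, so the conjugacy-invariance theorem for Fischer graphs cannot be applied directly; instead we exploit the concrete description of the Fischer graph of $\stud{X}$. By Lemma~\ref{studFischer} the graph $\graph{F}_\stud{X}$ is isomorphic to the star-studded graph $\stud{\graph{F}_X}$. Hence it is enough to prove that $\stud{\graph{F}_X}$ contains no eventually geodesic strictly proximal pair.

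For this I would invoke Lemma~\ref{studProx} with $\graph{G}=\graph{F}_X$. The hypothesis of the present lemma is exactly that $\graph{F}_X$ has no eventually geodesic strictly proximal pair, so Lemma~\ref{studProx} yields that $\stud{\graph{F}_X}$ has none either.

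Finally one observes that the property of possessing an eventually geodesic strictly proximal pair is an isomorphism invariant of labeled graphs: a graph isomorphism carries bi-infinite paths bijectively to bi-infinite paths, preserves the lengths of finite subpaths (so geodesics are sent to geodesics), and preserves equality and inequality of path coordinates (so proximal pairs are sent to proximal pairs and strict proximality is preserved). Transporting this back along the isomorphism of Lemma~\ref{studFischer}, we conclude that $\graph{F}_\stud{X}$ has no eventually geodesic strictly proximal pair. There is no genuine difficulty here: all the work was done in establishing Lemmas~\ref{studFischer} and~\ref{studProx}, and the present statement is merely their composition.
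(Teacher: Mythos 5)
Your proposal is correct and follows exactly the paper's argument: apply Lemma~\ref{studProx} to $\graph{F}_X$ and then transfer the conclusion through the isomorphism $\stud{\graph{F}_X}\cong\graph{F}_\stud{X}$ given by Lemma~\ref{studFischer}. The only difference is that you spell out the (routine) isomorphism-invariance step, which the paper leaves implicit.
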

\begin{proof}
By Lemma~\ref{studProx} the graph $\stud{\graph{F}_X}$ does not contain an eventually geodesic strictly proximal pair. By Lemma~\ref{studFischer} the graph $\graph{F}_\stud{X}$ is isomorphic to $\stud{\graph{F}_X}$, so $\graph{F}_\stud{X}$ does not contain an eventually geodesic stricly proximal pair.
\end{proof}

\begin{theorem}
If a non-sofic half-synchronized subshift $X$ has a fixed point and $\graph{F}_X$ does not contain an eventually geodesic strictly proximal pair, then $\stud{X}$ is direct prime.
\end{theorem}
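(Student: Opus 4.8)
The plan is to apply Corollary~\ref{critCor} to the subshift $\stud{X}$. To do this I need to verify three hypotheses for $\stud{X}$: that it is non-sofic, that it is half-synchronized, and that its Fischer graph contains no eventually geodesic strictly proximal pair, together with the existence of a fixed point.

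First I would invoke Lemma~\ref{studInherit}: since $X$ is half-synchronized and non-sofic, so is $\stud{X}$. Next, for the Fischer graph condition I would invoke Lemma~\ref{studFProx}: since $\graph{F}_X$ contains no eventually geodesic strictly proximal pair, neither does $\graph{F}_\stud{X}$. The remaining point is the fixed point. Here the given fixed point $a^\Z$ of $X$ (for some $a\in A$) is still a configuration of $\stud{X}$, because $\stud{X}$ was defined by forbidding $**$ together with all words $w$ with $h(w)\in\mathcal{F}$, and $a^\Z$ contains no $*$ and avoids all of $\mathcal{F}$; hence $a^\Z\in\stud{X}$ is a fixed point of $\stud{X}$.

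With all three hypotheses of Corollary~\ref{critCor} verified for $\stud{X}$, that corollary immediately yields that $\stud{X}$ is topologically direct prime, which is the desired conclusion.

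I do not expect any genuine obstacle here; the theorem is essentially an assembly of Lemmas~\ref{studInherit} and~\ref{studFProx} plus the trivial observation about the fixed point, fed into Corollary~\ref{critCor}. The only mild subtlety worth stating explicitly is why the fixed point survives into $\stud{X}$, namely that a constant configuration over $A$ contains no occurrence of $*$ and hence none of $**$, and avoids $\{w\in(A\cup\{*\})^*\mid h(w)\in\mathcal{F}\}$ precisely because $a^\Z$ avoids $\mathcal{F}$ in $X$.
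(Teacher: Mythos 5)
Your proposal is correct and follows essentially the same route as the paper: the paper's proof likewise feeds Lemma~\ref{studInherit}, Lemma~\ref{studFProx}, and the observation that a fixed point of $X$ remains a fixed point of $\stud{X}$ into Corollary~\ref{critCor}. You merely spell out the fixed-point and non-soficness checks a bit more explicitly than the paper does.
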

\begin{proof}
If $x$ is a fixed point of $X$, then it is also a fixed point of $\stud{X}$. By the previous lemma $\graph{F}_\stud{X}$ does not contain an eventually geodesic strictly proximal pair, so $\stud{X}$ is direct prime by Corollary~\ref{critCor}.
\end{proof}

Concrete examples can be obtained e.g. from $S$-gap shifts.

\begin{proposition}\label{SgapStar}If $X_S$ is a non-sofic $S$-gap shift, then $\stud{X_S}$ is a non-sofic synchronized subshift which is topologically direct prime and that admits a RCA with all directions sensitive.
\end{proposition}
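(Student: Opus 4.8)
The plan is to assemble the statement from the machinery already developed; the proposition is essentially a packaging of earlier results. First I would recall that a non-sofic $S$-gap shift $X_S$ is synchronized (the word $0$ is synchronizing), is non-sofic by hypothesis, is transitive, is infinite, and contains the fixed point $1^\Z$ (since $S$ is infinite, $L(X_S)$ contains $01^n$ for arbitrarily large $n$, so $1^\Z\in X_S$). Applying Lemma~\ref{studInherit} to $X_S$ then immediately yields that $\stud{X_S}$ is synchronized — in particular half-synchronized — and non-sofic, and the proof of that lemma also records that $\stud{X_S}$ is transitive.

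Next I would invoke the direct primeness half. In the proof of Theorem~\ref{primeS} it is established that $\graph{F}_{X_S}$ contains no eventually geodesic strictly proximal pair. Since $X_S$ is half-synchronized, non-sofic, and has a fixed point, the theorem immediately preceding this proposition applies directly and gives that $\stud{X_S}$ is topologically direct prime. Equivalently, one can pass through Lemma~\ref{studFProx} to see that $\graph{F}_{\stud{X_S}}$ has no eventually geodesic strictly proximal pair, observe that the fixed point $1^\Z$ of $X_S$ remains a fixed point of $\stud{X_S}$, and conclude via Corollary~\ref{critCor}.

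Finally, for the dynamical statement: because $X_S$ is an infinite transitive subshift, Proposition~\ref{starCA} provides the RCA $F_{X_S}\colon\stud{X_S}\to\stud{X_S}$ of Definition~\ref{starCAdef} and asserts that all of its directions are sensitive. Combining the three conclusions — non-sofic synchronized, topologically direct prime, and admitting an RCA with all directions sensitive — yields the proposition.

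There is no genuinely hard step here; the only point requiring care is lining up the hypotheses: one uses ``synchronized'' where the earlier theorems ask for ``half-synchronized'', which is legitimate since synchronized subshifts are half-synchronized, and one must invoke the preliminary discussion of $S$-gap shifts to guarantee that for non-sofic $X_S$ the set $S$ is infinite, so that $X_S$ is indeed infinite and contains the fixed point $1^\Z$.
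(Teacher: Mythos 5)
Your proposal is correct and follows essentially the same route as the paper: Lemma~\ref{studInherit} for the non-sofic synchronized part, the non-existence of eventually geodesic strictly proximal pairs in $\graph{F}_{X_S}$ (from the proof of Theorem~\ref{primeS}) combined with the fixed point $1^\Z$ and the preceding theorem for direct primeness, and Proposition~\ref{starCA} for the RCA with all directions sensitive. Your explicit remark that non-soficness forces $S$ to be infinite (hence $X_S$ infinite and $1^\Z\in X_S$) is a detail the paper leaves implicit, but the argument is the same.
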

\begin{proof}
Since $X_S$ is non-sofic and synchronized, it follows from Lemma~\ref{studInherit} that also $\stud{S_\beta}$ is non-sofic and synchronized. Since $X$ contains a constant configuration $1^\Z$ and in the proof of Theorem~\ref{primeS} it is shown that $\graph{F}_{X_S}$ does not contain an eventually geodesic strictly proximal pair, it follows from the previous theorem that $\stud{X_S}$ is topologically direct prime. By Proposition~\ref{starCA} the subshift $\stud{X_S}$ has a RCA with all directions sensitive.
\end{proof}

\section{Conclusions}

We have presented a new sufficient criterion that can be used to show that a non-sofic half-synchronized subshift is direct prime. We then applied the result to several natural classes of non-sofic half-synchronized subshifts. We expect that sharper criteria can be found.

\begin{problem}
Find a complete characterization (based on examining the Fischer graph) of direct prime half-synchronized subshifts.
\end{problem}

Unfortunately our criterion cannot make any distinctions between transitive sofic subshifts, because all the Fischer graphs of transitive sofic subshifts are finite and therefore they do not have eventually geodesic paths.

\begin{problem}
Find useful sufficient criteria (based on examining the Fischer graph) that can be used to show that a transitive sofic subshift is direct prime.
\end{problem}

As a first step it would be nice to find such a criterion that is able to distinguish the one vertex graph with two loops from the one vertex graph with six loops, because they are the Fischer graphs of the full shift $\Sigma_2^\Z$ (which is direct prime~\cite{Lind84}) and the full shift $\Sigma_6^\Z$ (which is not direct prime~\cite{Lind84}) respectively.

We have also given examples of non-sofic synchronized subshifts which are direct prime but that still have RCA with all directions sensitive. This is in contrast with previous examples of non-sofic half-synchronized subshifts without RCA with all directions sensitive~\cite{Kop20,Kop20glider}, which might have given the impression that such RCA can exist on a non-sofic synchronized subshift only when it can be represented as a product of two infinite subshifts. At this point it is not clear what kind of an answer one should expect to the following problem.

\begin{problem}
Characterize the non-sofic synchronized subshifts that admit RCA with only sensitive directions.
\end{problem}

One may also ask whether the property of all directions being sensitive is an appropriate minimal criterion for a CA to be dynamically complex. Indeed, it is still possible to ``extract'' trivial dynamics of an infinite set from the CA $F_X$ presented in Definition~\ref{starCAdef} in the following precise sense. The list of forbidden words $\mathcal{F}=\{11\}$ determines the golden mean subshift $X_\mathcal{F}\subseteq\digs_2^\Z$. Given any subshift $X\subseteq A^\Z$, the symbol map $a\mapsto 0$ ($a\in A$) and $*\mapsto 1$ extends to a surjective sliding block code $\phi:\stud{X}\to X_\mathcal{F}$. Then the identity map CA $\id:X_\mathcal{F}\to X_\mathcal{F}$ is a \emph{factor CA} of $F_X$ (via $\phi$), meaning that $\phi$ is surjective and $\phi{\;\circ\;} F_X=\id {\;\circ\;} \phi$. Similarly the partial shift map $\tau: Y\times Z\to Y\times Z$ defined by $\tau(y,z)=(\sigma(y),z)$ has the identity map on $Z$ as a factor CA.

\begin{problem}
Does there exist a non-sofic synchronized subshift $X$ (or even a more general transitive non-sofic subshift $X$) and a RCA $F:X\to X$ such that whenever $F':Y\to Y$ is a factor CA of $F$ on an infinite subshift $Y$, then $F'$ has all directions sensitive? 
\end{problem}

\section*{Acknowledgements}
The work was supported by the Finnish Cultural Foundation.

\bibliographystyle{plain}
\bibliography{mybib}{}

\end{document}